\definecolor{linkred}{rgb}{0.48,0.1,0.05}
\definecolor{linkblue}{RGB}{16, 78, 139}
	\titlespacing{\section}{0pt}{12pt}{0pt}
	\titlespacing{\subsection}{0pt}{6pt}{0pt}
\long\def\@footnotetext#1{%
\H@@footnotetext{%
\ifHy@nesting 
\hyper@@anchor{\@currentHref}{#1}%
\else 
\Hy@raisedlink{\hyper@@anchor{\@currentHref}{\relax}}#1%
\fi 
}}
\def\@footnotemark{%
\leavevmode 
\ifhmode\edef\@x@sf{\the\spacefactor}\nobreak\fi 
\H@refstepcounter{Hfootnote}%
\hyper@makecurrent{Hfootnote}%
\hyper@linkstart{link}{\@currentHref}%
\@makefnmark 
\hyper@linkend 
\ifhmode\spacefactor\@x@sf\fi 
\relax 
}%
\renewcommand*\@footnotemark{%
\leavevmode 
\ifhmode 
\edef\@x@sf{\the\spacefactor}%
\FN@mf@check 
\nobreak 
\fi 
\H@refstepcounter{Hfootnote}%
\hyper@makecurrent{Hfootnote}%
\hyper@linkstart{link}{\@currentHref}%
\@makefnmark 
\hyper@linkend 
\ifFN@pp@towrite 
\FN@pp@writetemp 
\FN@pp@towritefalse 
\fi 
\FN@mf@prepare 
\ifhmode\spacefactor\@x@sf\fi 
\relax%
}%
\theoremstyle{plain}
\newtheorem{theorem}{Theorem}[section]
\newtheorem{lemma}[theorem]{Lemma}
\theoremstyle{definition}
\newtheorem{notation}[theorem]{Notation}
\newcommand{\ii}{{\mathit i}}
\newcommand{\arcsinh}{{\,\rm arcsinh}}
\newcommand{\diam}{{\rm diam}}
\newcommand{\teich}{{\mathcal T_{g}}}
\newcommand{\moduli}{{\mathcal M_{g}}}
\newcommand{\moduliep}{{\mathcal M_g^\varepsilon}}
\newcommand{\ts}{{\mathbf X_{g}}}
\newcommand{\tri}{{\mathbf Y_{g}}}
\newcommand{\trio}{{\mathbf Y^{\,0}_{g}}}
\newcommand{\DP}{\mathrm{DP}}
\newcommand{\MDP}{\mathcal{M}\mathrm{DP}}
\long\def\symbolfootnote[#1]#2{\begingroup%
\def\thefootnote{\fnsymbol{footnote}}\footnote[#1]{#2}\endgroup}
\def\blfootnote{\xdef\@thefnmark{}\@footnotetext}
\begin{document}

{\Large \bfseries \sc Relative shapes of thick subsets of moduli space}

{\bfseries James W. Anderson, Hugo Parlier, Alexandra Pettet\symbolfootnote[2]{\normalsize Research of 2nd author supported by Swiss National Science Foundation grant number PP00P2\textunderscore 128557,
Research of the 3rd author was partially supported by NSF grant DMS-0856143, EPSRC grant EP/D073626/2, and an NSERC Discovery Grant. \\
{\em 2010 Mathematics Subject Classification:} Primary: 32G15. Secondary: 57M15. \\
{\em Key words and phrases:} moduli spaces, Teichm\"uller spaces, systoles} }

{\em Abstract.} A closed hyperbolic surface of genus $g\ge 2$ can be decomposed into pairs of pants along shortest closed geodesics and if these curves are sufficiently short (and with lengths uniformly bounded away from $0$), then the geometry of the surface is essentially determined by the combinatorics of the pants decomposition. These combinatorics are determined by a trivalent graph, so we call such surfaces {\em trivalent}. 

In this paper, in a first attempt to understand the ``shape'' of  the subset $\ts$ of moduli space consisting  of surfaces whose systoles fill, we compare it metrically, asymptotically in g, with the set $\tri$ of trivalent surfaces. As our main result, we find that the set $\ts \cap \tri$ is metrically ``sparse'' in $\ts$ (where we equip $\moduli$ with either the Thurston or the Teichm\"uller metric).

\vspace{1cm}

\section{Introduction} \label{introduction}

Although there exists a rich theory describing the geometry of the moduli space $\moduli$ of a closed orientable surface of genus $g \geq 2$, surprisingly little is understood about qualities that might be described as aspects of its ``shape'' when equipped with a natural metric. Such properties are especially meaningful for various subsets of $\moduli$, such as its so-called $\varepsilon$-{\it thick part} $\moduliep$ consisting of those surfaces with injectivity radius bounded from below by some fixed $\varepsilon>0$. 

Rafi and Tao \cite{Rafi-Tao} have offered a starting point for these investigations, proving that 
$$\diam(\moduliep) \asymp \log (g)
$$
when $\moduli$ is equipped with either the Thurston or the Teichm\"uller metric. (Here $\asymp$ means equal up to multiplicative constants that do not depend on $g$.) Their strategy is to metrically approximate $\moduliep$ by the subset $\tri$ consisting of trivalent surfaces: such surfaces are surfaces with a pants decomposition with all curves of lengths bounded above and below by positive constants independent of $g$. 

A further motivation for our study comes from the well-known and difficult problem of constructing a spine for moduli space, i.e., a deformation retract in $\moduli$ of minimal dimension. Equivalently, one would like to find a mapping class group equivariant deformation retract of minimal dimension in Teichm\"uller space $\teich$. In a short preprint \cite{Thurston-spine}, Thurston proposed as a candidate spine the subset in $\moduli$ consisting of those hyperbolic surfaces whose {\it systoles} fill the surface. (A systole of a hyperbolic surface is a non-trivial geodesic of minimal length.) Thurston provided a sketch of a proof that this set is a deformation retract, which unfortunately appears difficult to complete. In particular the contractibility and the connectivity of his candidate remain open. It is moreover not clear how to determine the dimension of this set; for more on this, see \cite{Anderson-Parlier-Pettet}. We will refer to the set in moduli space of those hyperbolic surfaces whose systoles fill as the {\it Thurston well-rounded retract}, or simply the {\it Thurston set}, denoted by $\ts$.

In this paper, we make a first attempt at understanding the ``shape'' of the set $\ts$ by means of comparing it to the set $\tri$ of trivalent surfaces whose shape is becoming well-understood. We are specifically interested in the extent to which the subsets $\ts$ and $\tri$ of $\moduli$ (or the parts that lie in the thick part $\moduliep$ of $\moduli$) imitate each other metrically, asymptotically in $g$, with either the Thurston (or Lipschitz) metric or the Teichm\"uller metric on $\moduli$.  We first note that Rafi and Tao's result gives an obvious upper bound on the diameter of $\ts$ since it is a compact subset of  $\moduliep$ for sufficiently small  $\varepsilon$. A lower bound, matching asymptotically this upper bound, is implicit in examples that arise naturally from our results here. Our first main result is the following.

\begin{theorem}\label{th:BigBers}
There exists a sequence of surfaces $S_{g_k}$ of genus $g_k\to \infty$ with a filling set of systoles and with Bers constant $>2 \sqrt{g_k}$.
\end{theorem}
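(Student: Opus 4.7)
The plan is to exhibit the sequence $\{S_{g_k}\}$ explicitly, as arithmetic hyperbolic surfaces (for instance, principal congruence covers in a fixed arithmetic tower, as in the Buser--Sarnak family), and verify the filling and Bers conditions by independent arguments. For such surfaces, the systoles have length $\Theta(\log g_k)$ and the isometry group of $S_{g_k}$ is large enough to permute the systoles transitively. From this transitivity, the union of systoles is an isometry-invariant subset of $S_{g_k}$ through which geodesic arcs pass in many transverse directions at every orbit; combined with a count of systoles (of order at least a power of $g_k$), this forces the complement of the systole locus to consist only of topological disks, giving the filling property and hence $S_{g_k} \in \ts$.

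For the Bers constant lower bound I would exploit the fact that the surfaces in the Buser--Sarnak tower are expanders: their Cheeger constant is bounded below by a positive constant $h$ independent of $k$. Suppose for contradiction that a pants decomposition $\mathcal{P} = \{\gamma_1, \dots, \gamma_{3g-3}\}$ had all $\ell(\gamma_i) \le 2\sqrt{g_k}$. Viewing $\mathcal{P}$ through its trivalent dual graph on $2g-2$ vertices, one can iteratively cut along subcollections of $\mathcal{P}$ to isolate a subsurface of area between $\tfrac{1}{3}\area(S_{g_k})$ and $\tfrac{2}{3}\area(S_{g_k})$ whose boundary is a union of at most $O(\sqrt{g_k})$ of the $\gamma_i$; the Cheeger inequality then forces the total boundary length to be at least $h \cdot \Theta(g_k)$, contradicting the upper estimate $O(\sqrt{g_k} \cdot \sqrt{g_k}) = O(g_k)$ unless the implicit constants are arranged so that $2\sqrt{g_k}$ is insufficient. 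By passing to a subsequence in which the constants are sharp, one obtains Bers constant strictly greater than $2\sqrt{g_k}$.

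The main obstacle will be the filling step: while $\Omega(\sqrt{g})$ lower bounds on Bers' constant via Cheeger/expansion are by now classical, combining them with a guarantee that the systoles fill is delicate, since in general an expander surface could have very few or sparsely distributed systoles. If the arithmetic construction does not directly furnish the filling property, the fallback is a controlled surgery: starting from a surface known to have Bers constant exceeding $2\sqrt{g_k}$, excise small disks (carefully bounded to not damage the Cheeger constant) and glue in a filling configuration of short arcs forming new systoles. One then checks that neither the expansion nor the fact that the newly-inserted short curves are genuinely systolic is destroyed by the surgery, and the theorem follows.
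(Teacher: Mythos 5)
Your proposal takes a genuinely different route from the paper, but both main steps have gaps that I do not see how to repair along the lines you suggest.

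The paper's construction is explicit and self-contained: it tiles a genus $1$ orbifold $T$ with $2g-2$ cone points of angle $\pi$ by copies of a single hyperbolic quadrilateral, passes to the genus~$g$ double cover $S$ branched over the cone points, and then verifies both properties by hand. Filling is immediate because the systoles of $S$ project to the sides of the tiling quadrilaterals, whose complement in $T$ is a union of disks. The Bers bound follows because the image of any pants decomposition of $S$ must contain a curve that is homotopically non-trivial on the torus $T$, and any such curve must cross the $m\times n$ grid ``east--west'' or ``north--south'', forcing length at least $2t\min\{m,n\}$; taking $m=n\approx\sqrt{2g}$ gives the claim. No expansion, no arithmeticity, and no surgery is needed.

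The filling step in your argument is the most serious gap. Transitivity of a large isometry group on the set of systoles does not imply filling: the complement of an isometry-invariant collection of curves need not be a union of disks (consider a hyperelliptic surface whose Weierstrass symmetry permutes the systoles but still leaves a non-trivial invariant curve disjoint from all of them). For congruence covers in the Buser--Sarnak tower, whether the systoles fill is, as far as I know, not established, and your count-plus-transitivity heuristic does not supply a proof. The paper avoids this entirely by engineering the systoles so that their complement is visibly a union of quadrilateral tiles.

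The Cheeger step is also flawed as written. You assume one can cut the trivalent dual graph of $\mathcal{P}$ into two pieces of comparable size using only $O(\sqrt{g_k})$ of the $3g_k-3$ curves. But the dual graph of a pants decomposition on an expander surface will typically itself be a combinatorial expander, in which case every balanced cut has $\Theta(g_k)$ edges, not $O(\sqrt{g_k})$. The isoperimetric inequality then only gives $\ell(\partial) \ge h\cdot\Theta(g_k)$ while your upper estimate becomes $\Theta(g_k)\cdot 2\sqrt{g_k} = O(g_k^{3/2})$, so there is no contradiction and no lower bound on any individual $\ell(\gamma_i)$. More broadly, I do not believe it is ``classical'' that expansion alone implies a $\sqrt{g}$ lower bound on Bers constant; the known $\sqrt{g}$ lower bounds (Buser) come precisely from hairy-torus-type constructions, which is what the paper uses. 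Finally, the fallback surgery paragraph asks to insert a filling configuration of new systoles while simultaneously preserving the Cheeger constant and the systolic property of the inserted curves; these two requirements pull in opposite directions (short new curves tend to damage expansion and to be beaten by even shorter curves elsewhere), and no mechanism is given for reconciling them.
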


Our examples, as are some of those of Rafi and Tao, are based on the so-called ``hairy torus" examples due to Buser \cite{Buser}. For the lower bounds on their diameter estimates, Rafi and Tao also use the different shapes one can produce with a trivalent graph, imitating these shapes with trivalent surfaces. Similarly, we are able to do this in $\ts$. Specifically we obtain the following, where ${\bf X}_g^\ell$ is the subset of $\ts$ where the systole is equal to $\ell$. 

\begin{theorem}\label{th:QItrivalent}
There exist absolute constants $A$, $B, \ell >0$ such that for any finite trivalent graph $G$ there exist $g\ge 2$ and $S\in {\bf X}_g^\ell$ such that $S$ and $G$ are $(A,B)$-quasi-isometric.
\end{theorem}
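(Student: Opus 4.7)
The plan is to build $S$ from copies of a single symmetric pair of pants, one per vertex of $G$, with all boundary curves of a universal length $\ell$, glued along the edges of $G$ with half-twists (twist parameter $\ell/2$). The resulting surface has a natural pants decomposition with all pants curves of length $\ell$ and combinatorial structure mirroring $G$; its genus is $|V(G)|/2 + 1$ when $G$ is connected, by Euler characteristic.

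To fix $\ell$, I would analyze the symmetric pair of pants $P_0(x)$ with all three boundaries of length $x$. After gluing two copies of $P_0(x)$ along one boundary with a half-twist, consider a simple closed geodesic that crosses the gluing curve and wraps once around one boundary of each pants; its length $L(x)$ is a continuous, monotone function of $x$, computable in closed form from right-angled hexagon trigonometry. I would take $\ell$ to be the unique value of $x$ satisfying $L(\ell) = \ell$, which exists by an intermediate-value argument in a suitable range.

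Given $G$, for each vertex $v$ take a copy $P_v := P_0(\ell)$ and glue according to $G$ with half-twists, obtaining $S$. It remains to show: (i) the systole length of $S$ equals $\ell$; (ii) the systoles of $S$ fill; and (iii) $S$ is quasi-isometric to $G$ with absolute constants. For (i), every closed geodesic in $S$ falls into one of three combinatorial types relative to the pants decomposition: a pants curve (length exactly $\ell$); a simple transverse geodesic crossing a single gluing curve and wrapping once in each adjacent pants (length exactly $\ell$ by choice of $\ell$); or a geodesic with more complicated crossing pattern (strictly longer than $\ell$, by collar estimates and hexagon trigonometry). For (ii), each of the transverse systoles above contains an arc crossing a pants $P_v$ between two of its boundaries; since each $P_v$ has three boundaries and three adjacent gluings, these arcs together with the pants curves cut $P_v$ into right-angled hexagons, so the complement of the systoles is a disjoint union of disks. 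For (iii), define $f : G \to S$ by sending each vertex $v$ to a basepoint of $P_v$ (say the centroid of its tripod of orthogeodesics) and each edge to a shortest arc through the corresponding gluing curve. Since each pants has diameter depending only on $\ell$, and since edges of $G$ correspond exactly to gluings of $S$, $f$ is an $(A,B)$-quasi-isometry with universal constants $A$ and $B$.

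The main obstacle is the lower bound in (i)—ruling out short closed geodesics with complicated intersection pattern with the pants decomposition. The half-twist forces any geodesic crossing a gluing curve to detour across the adjacent pants, and a careful combinatorial enumeration of crossing types, combined with standard hyperbolic estimates (collar and half-collar lemmas, and right-angled hexagon formulas), should yield the desired bound of $\ell$. Once this is established, (ii) and (iii) follow directly from the symmetry of the construction and the bounded geometry of each pair of pants.
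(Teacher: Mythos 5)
Your construction is genuinely different from the paper's: you replace each vertex by a single symmetric pair of pants glued with half-twists, whereas the paper replaces each vertex by a one-holed torus $Y$ built from twelve right-angled pentagons, glued without twists, after first passing (Lemma~\ref{lem:graphqi}) to a trivalent graph of girth at least 6. The paper states explicitly that it ``works with these higher complexity building blocks $Y$, rather than pants, to give us room to fill the final surface with systoles,'' and your proposal runs into exactly the issues this choice was designed to avoid.

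First, your filling argument (ii) is internally inconsistent. The transverse curve you construct crosses a single gluing curve twice and ``wraps once around one boundary of each pants''; its arc in a given pants $P_v$ therefore runs from one boundary component back to the \emph{same} boundary component. But your filling argument asserts that each such systole ``contains an arc crossing a pants $P_v$ \emph{between two} of its boundaries.'' These are topologically different arcs, and the latter are the ones you would actually need to cut $P_v$ into disks. As written, the argument does not establish that the complement of the systoles is a union of disks; you would need a careful enumeration of which isotopy classes of arcs appear in each pants and a verification (sensitive to the half-twist) that they suffice.

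Second, and more seriously, you never control curves whose projection to $G$ is a nontrivial cycle. If $G$ has a $3$- or $4$-cycle, there is a closed geodesic visiting those pants once each, crossing a small number of gluing curves, and its length is governed by the distance between boundary components of a symmetric pants with all cuffs of length $\ell$ and by the half-twist offsets --- it has no reason a priori to be at least $\ell$. Your intermediate-value choice of $\ell$ forces the transverse curve to have length $\ell$, but this gives you no freedom to simultaneously make the cycle curves long. This is precisely why the paper first replaces $G$ by a uniformly quasi-isometric graph of girth $\geq 6$, and why the $Y$-piece is engineered so that the distance between its boundary components is $s/6$: then any curve traversing a cycle in the graph crosses at least six $Y$-pieces and accumulates length $\geq s$. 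Without a girth reduction step and a matching geometric estimate, your claim in (i) that ``a geodesic with more complicated crossing pattern'' is strictly longer than $\ell$ is unproven and likely false for graphs with short cycles. Adding a girth-reduction lemma and redoing the length estimates for a pants-based construction might salvage the approach, but it is not a routine matter of ``collar and hexagon trigonometry'' as suggested.
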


From either of these theorems (using the well known relationship between the Thurston and Teichm\"uller metrics \cite{Kerckhoff,Thurston-metric,Wolpert}, it is easy to deduce the following:
\begin{equation*}
\limsup_{g\to \infty} d_H(\ts, \tri) \asymp \log (g),
\end{equation*}
where $d_H(\cdot, \cdot)$ is the Hausdorff distance on $\moduliep$ induced by either the Teichm\"uller or Thurston metrics.  It is an artifact of our constructions that we obtain $\limsup$ rather than a lower bound of order $\log (g)$, and one should not read too much into it. Constructing a surface with a filling set of systoles is somewhat delicate as any small deformation potentially kills all systoles (but one). It seems likely that a more delicate construction than the one give below would provide examples that appear in every genus.

However, it is significant that in Theorem \ref{th:QItrivalent} we do not have strict control over the genus of the surface obtained from an arbitrary trivalent graph. In fact, our next main result proves that, at least within the set $\tri$, the surfaces in $\ts$ are in some sense sparse. 

\begin{theorem}\label{thm:main}
There exists an absolute constant $C>0$ such that a {\it random} surface in $\tri$ has distance in $\tri$ at least $C \log (g)$ from $\ts$.
\end{theorem}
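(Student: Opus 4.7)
The plan is to use the quasi-isometric dictionary between $\tri$ and trivalent graphs underlying the Rafi--Tao framework, together with standard properties of random cubic graphs. A random $S \in \tri$ is modeled by choosing a uniformly random 3-regular graph $G$ on $n = 2g-2$ vertices together with Fenchel--Nielsen data in a fixed bounded box; with high probability $G$ is an expander of girth $\Theta(\log g)$ and contains only few short cycles.

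First I would convert proximity to $\ts$ into a combinatorial covering property on $G$. Suppose $d(S,S')\le D$ with $S' \in \ts$. The usual Lipschitz estimate for the Thurston/Teichm\"uller metric gives $\ell_\gamma(S)/\ell_\gamma(S') \in [e^{-2D}, e^{2D}]$ for every simple closed curve $\gamma$, and since the systole of $S$ is bounded above by the maximal pants-length $\ell_{\max}$, this forces each filling systole of $S'$, viewed as a topological curve on $S = S_G$, to have length at most $e^{4D}\ell_{\max}$. On $S_G$ such a curve is either a pants curve, or corresponds to a closed walk in $G$ whose geometric length dominates a constant multiple of the walk's combinatorial length (with constant depending only on the bounded pants-length range). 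Since pants curves are disjoint and cannot alone fill $S_G$, a filling collection forces $G$ to admit a covering of its vertex set by closed walks of bounded combinatorial length.

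Next, I would apply the probabilistic input: for a random cubic $G$, the expected number of vertices lying on cycles of length $\le k$ is $O(k\cdot 2^k)$, so covering all $2g-2$ vertices forces $k \gtrsim \log g$, and hence $e^{4D}\ell_{\max} \gtrsim \log g$.

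The main obstacle will be upgrading this to the claimed $\log g$ distance bound rather than the $\log \log g$ bound that the Lipschitz estimate yields directly. This requires working with the intrinsic metric on $\tri$: paths in $\tri$ only modify the combinatorial graph $G$ via local flip-like moves, each of bounded intrinsic cost and each affecting only a bounded number of short cycles of $G$. Transforming an expander-like $G$ into a graph admitting a short-cycle vertex cover then requires $\Omega(\log g)$ such moves, which would be the technical heart of the proof and likely involves a refinement of the Rafi--Tao quasi-isometry adapted to this setting.
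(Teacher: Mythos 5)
Your outline shares the paper's starting point (work via the quasi-isometry between $\tri$ and the space of trivalent graphs under simultaneous Whitehead moves) and your translation of ``$S'\in\ts$ is close to $S\in\tri$'' into ``the underlying graph $G$ must be covered by short cycles'' is close in spirit to the paper's Lemma \ref{lem:short}, which shows that the graph of a surface in $\ts\cap\tri$ must have $hg$ disjoint cycles of uniformly bounded length. But the route you propose to get the $\log g$ bound has a genuine gap, and it is precisely the point the paper's counting argument is designed to circumvent.

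You write that ``paths in $\tri$ only modify the combinatorial graph $G$ via local flip-like moves, each of bounded intrinsic cost and each affecting only a bounded number of short cycles of $G$.'' This is not true in the metric the theorem is stated in. The relevant combinatorial model is $\MDP(\Sigma_g)$, the mod-mapping-class-group \emph{diagonal} pants graph, whose single edges are \emph{simultaneous} Whitehead moves: one unit step can perform up to $g-1$ elementary moves on disjoint four-holed spheres at once, hence can create or destroy order-$g$ short cycles in a single step. (This is forced by Lemma \ref{lem:qi1}: two points of $\trio$ related by a simultaneous move are $O(1)$-close in $d_\tri$, so the induced path metric on $\tri$ cannot be finer than the $\MDP$ metric.) So ``transforming an expander into a short-cycle-covered graph requires $\Omega(\log g)$ local flips'' does not follow; a bounded number of simultaneous moves could in principle do it, and ruling this out is the actual content of the theorem. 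The paper avoids this problem by not reasoning about what an individual move can do at all: it counts. Bollob\'as gives $|\trio|\approx g^{2g}$; the graphs forced by Lemma \ref{lem:short} (those with $hg$ disjoint short cycles) number at most $g^{\nu g}$ with $\nu<2$ (Lemma \ref{lem:counting}); and a radius-$r$ ball in $\MDP(\Sigma_g)$ has at most $(3^{g-1})^r$ vertices (Lemma \ref{lem:balls}), a bound that honestly reflects the $\Theta(g)$-many things a single simultaneous move can change. Comparing $g^{\nu g}(3^{g-1})^r$ with $g^{2g}$ gives $r\gtrsim\log g$. To repair your argument you would need either a lower bound on how many moves are needed to produce $hg$ disjoint short cycles that is robust under simultaneous moves (which seems hard to prove directly, and your expander/girth heuristic does not supply it), or to replace the ``trajectory'' reasoning with a volume-versus-count comparison as the paper does.
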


In particular, this would be in contradiction with Theorem \ref{th:QItrivalent} had the control on genus been too strict. 

Note that here the distance we consider on $\tri$ is the path metric obtained by computing path lengths in $\tri$ with either the Thurston or Teichm\"uller metrics (and thus {\it not} the restriction of the distance functions to $\tri$). A more detailed discussion of this can be found in Section \ref{sec:counting} where we also make the randomness in the statement precise. Loosely speaking by ``random" we mean any reasonable notion of random coming from the natural counting measure associated to the trivalent graphs used to construct the surfaces of $\tri$. The techniques used to proved this theorem seem to have independent interest, involving a number of lemmas about pants decompositions and graph counting. 

\section{A surface in the Thurston set with large Bers constant}\label{sec:bers}

Recall that a {\it pants decomposition} of a closed hyperbolic surface of genus $g \geq 2$ corresponds to a maximal collection of disjoint  simple closed geodesics, or {\it pants curves}, on the surface. Any connected component of the complement of the curves is a three-holed sphere, or {\it pair of pants}. It is a theorem of Bers, quantified by Buser (see \cite{Buser} and references therein), that every surface has a pants decomposition of length bounded by a function which only depends on the topology of the surface. For us, the {\it Bers constant} of the surface is the smallest $L>0$ for which there is a pants decomposition all of whose curves have length at most $L$. By the theorems cited above, it is known that the Bers constant of any genus $g$ surface is at most $21(g-1)$. On the other hand, there exist surfaces where any pants decomposition has a curve of length at least $\sqrt{6g}-2$. 

In this section, we find surfaces in the Thurston set all of whose pants decompositions have a curve of length at least the square root of their genus (for arbitrarily high genus) 
but with their systoles bounded by an absolute constant. It is perhaps somewhat surprising that a surface has simultaneously a filling set of short curves and only long pants decompositions but in fact Buser's hairy torus examples \cite{Buser} have similar properties. Those surfaces have long pants decompositions with injectivity radius uniformly bounded from above and our family of surfaces is directly inspired by these hairy tori. 

{\bf Theorem \ref{th:BigBers}.} {\em There exists a sequence of surfaces $S_{g_k}$ of genus $g_k\to \infty$ with a filling set of systoles and with Bers constant $>2 \sqrt{g_k}$.}

Before proceeding with the proof, we make a couple of remarks. The first is that, although our proof of Theorem \ref{th:BigBers} provides a slightly better constant in the lower bound, for simplicity we leave this as ``2'' since it is really only the order of growth we are concerned with. Second, any Lipschitz map from a trivalent surface (where the lengths of the curves of some pants decomposition lie in the interval $[a,b]$, with $a$ and $b$ independent of $g$) to $S_{g_k}$ must then have Lipschitz constant comparable to at least $\sqrt{g_k}$, because some pants curve on the trivalent surface must get stretched to a curve of at least that length. Thus Theorem \ref{th:BigBers} implies a lower bound on the Hausdorff distance between $\ts$ and $\tri$, in either the Teichm\"uller or the Thurston metric on $\moduli$. In particular:
\begin{equation*}
\limsup_{g\to \infty} d_H(\ts, \tri) \asymp \log (g).
\end{equation*}

\begin{proof}

We will construct a surface of genus $1$ with $2g-2$ cone points of angle $\pi$, and then view it as the quotient of a genus $g$ surface by an orientation preserving involution with $2g-2$ fixed points. We will then show that this double covering surface has the properties we require.

The basic building block for our surface is the unique hyperbolic quadrilateral with all four angles equal to $\pi/4$ and an order four cyclic isometry. For future reference we refer to this quadrilateral as the {\it square}. It is itself obtained by gluing together four copies of a symmetric Lambert quadrilateral whose only angle not equal to $\frac{\pi}{2}$ is $\frac{\pi}{4}$. If we denote by $t$ the length of the two sides of equal length opposite to the angle $\frac{\pi}{4}$, then $t$ satisfies 
$$
\sinh^2(t) = \cos\left( \frac{\pi}{4}\right)
$$
and thus 
$$
t = \arcsinh \left(\sfrac{1}{2^{\frac{1}{4}}}\right).
$$
What we  need in what follows is that the shortest distance between two opposite sides of the square is $2t$.

Now for every pair of integers $m,n$, we  construct a torus with $m n$ singular points of angle $\pi$ as follows: We arrange $mn$ copies of the square in a $m \times n$ rectangular grid. We then paste opposite sides of the rectangle in the obvious way to obtain a torus $T$. 

Note that the singular points of the torus thus obtained are meeting points of exactly four squares, and so have angle $\pi$. Furthermore there is a natural action of $\mathbb{Z}_m\times \mathbb{Z}_n$ on $T$, and this action is transitive on the squares.

Our goal is to construct large genus surfaces, and so we  have in mind that $m,n$ should be large numbers. If $mn$ is even, there is a closed hyperbolic surface $S$ of genus $g=\frac{mn+2}{2}$ and an orientation preserving involution $\sigma$ of $S$ with $2g-2$ fixed points such that the quotient of $S$ by $\sigma$ is the torus $T$. We must show that such a surface $S$ has a filling set of systoles, and that for a suitable choice of $m$ and $n$, any pants decomposition has a ``long'' curve.

A first observation is that simple geodesic paths on $T$ between pairs of singular points on $T$ (that do not pass through any other singular points than those at their endpoints) lift to simple closed geodesics on $S$. This is because on $S$, the involution $\sigma$ acts as rotation by angle $\pi$ around any fixed point of $\sigma$. As such, each such path lifts to two distinct copies of the path on $T$ between the two fixed points; these copies of the lifted path meet at an angle of $\pi$ at both endpoints and so they form a simple closed geodesic.

Among all pairs of singular points on $T$, there are some pairs that are distinguished by being the singular points that lie at the ends of a side of a single square in $T$.  We refer to these as {\em basic pairs} of singular points.  The paths between basic pairs of singular points are  the shortest paths between distinct singular points on $T$. (To see this one can observe that maximal radius disjoint balls around the singular points meet on the centers of the sides of the square.) 

\begin{figure}
  \centering
  \includegraphics[width=5cm]{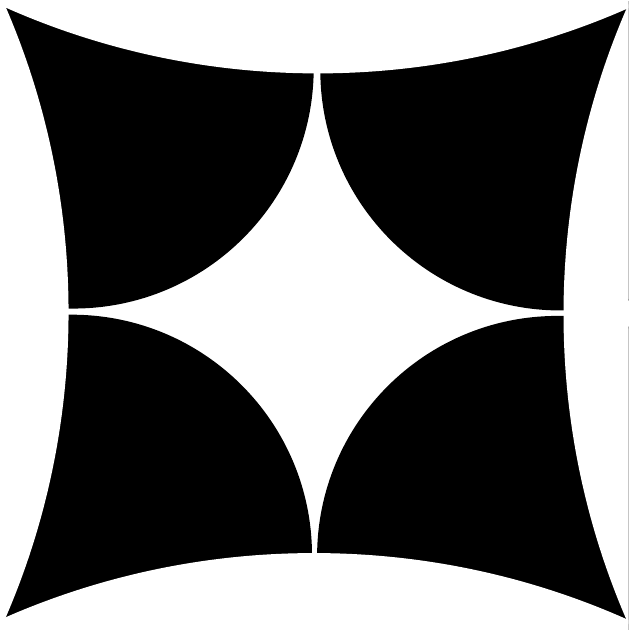}
  \caption{\footnotesize The ``square"}
  \label{fig:quadrilateral}
\end{figure}

In particular, we can use this to show that a shortest path between a basic pair of singular points will lift to a systole of $S$. To see this, take a simple closed geodesic $\gamma$ on $S$ and consider its image under the quotient of $S$ by $\sigma$. There are two cases to consider.

\begin{figure}
  \centering
  \includegraphics[width=8.5cm]{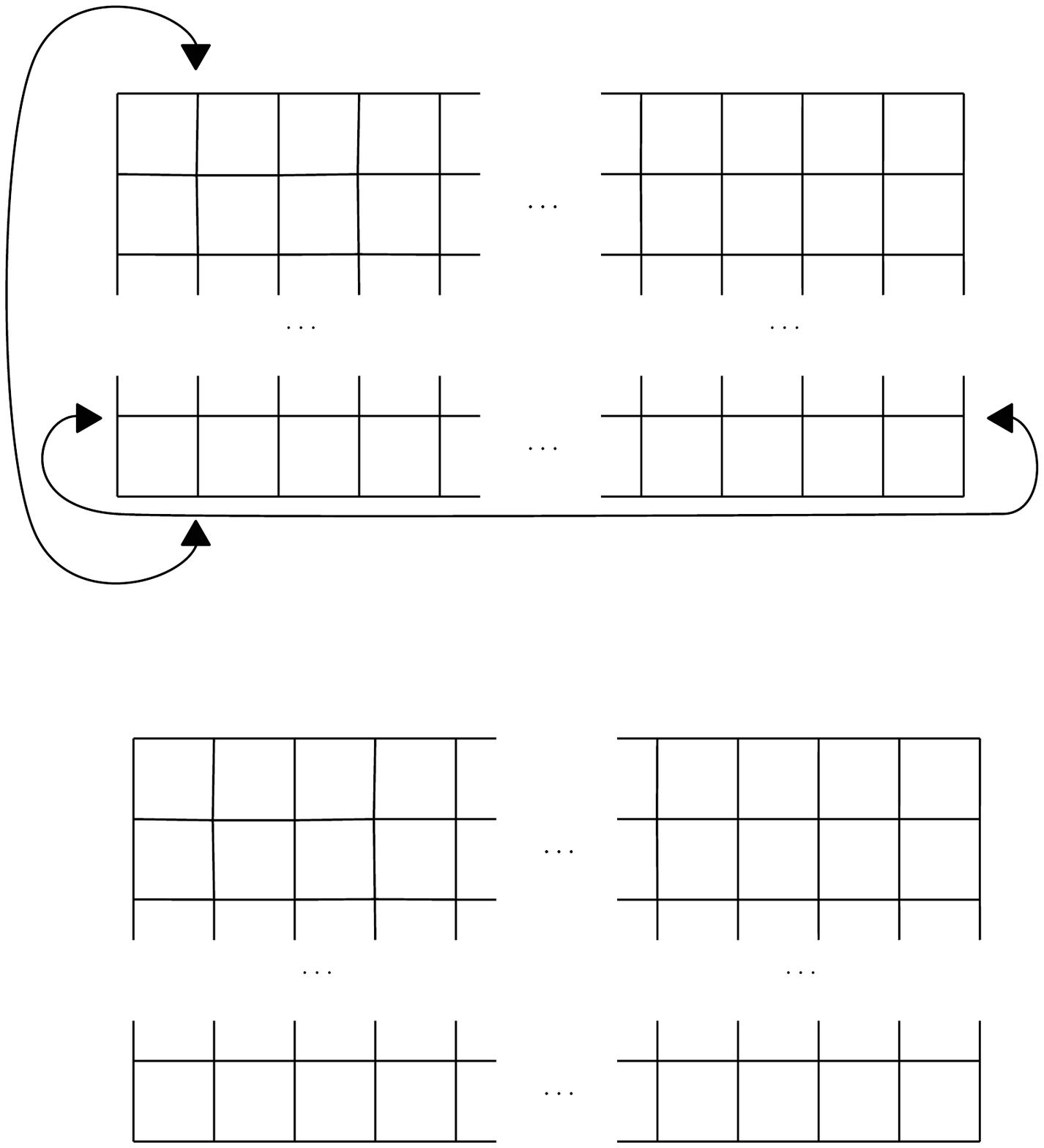}
  \caption{\footnotesize The schematics for building the torus from the quadrilaterals}
  \label{fig:torusgluing}
\end{figure}

If $\gamma$ goes through a fixed point of $\sigma$, then it will be invariant under the action of $\sigma$. Then $\gamma$ contains a second fixed point, necessarily the point diametrically opposite on $\gamma$ to the first fixed point. Thus $\gamma$ will descend to a simple path on $T$ between distinct singular points.

If $\gamma$ is not invariant under the action of $\sigma$ then it must descend to a non-trivial geodesic of the same length on $T$, possibly non-simple. We can conclude that for large enough $m,n$, the shortest closed geodesics on $T$ are exactly those that follow a side of a square between two singular points and then return. 

We now prove that the systoles of $S$ fill. Their quotients under $\sigma$ clearly fill $T$ as the complementary region is a collection of quadrilaterals. This is actually sufficient. To see this suppose that they did not fill $S$. Then there would be a non-trivial curve on $S$ not intersected by its systoles and thus a non-trivial curve in the quotient not intersected by the systoles; this is impossible.

We now focus our attention to pants decompositions. The crucial observation is the following:

{\it Every pants decomposition on $S$ must include a closed curve that descends to a non-trivial curve on $T$.}

Note that we are viewing $T$ here as a topological torus, as opposed to an orbifold. By a non-trivial curve, we mean a representative of a non-trivial element in the  fundamental group of $T$.

{\it Proof of the observation:} 
The image under the involution $\sigma$ of each curve from a pants decomposition on $S$ is a connected, not necessarily simple, curve on $T$. The image of the full pants decomposition under $\sigma$ must fill $T$. In particular, if $T$ is cut along these curves, the resulting surface has genus $0$. This means that the result of cutting $T$ along at least one of these image curves must be genus $0$. This curve is therefore necessarily non-trivial, which proves the observation.

We now establish a lower bound for the length of such a curve in $T$. If we view $T$ as a rectangle with a standard gluing, any non-trivial curve on $T$ must either go ``east-west'' or ``north-south'' (or both). As such it must have length at least $2 t \min\{m,n\}$. The same holds true of course for the preimage of such a curve under the involution. 

Setting $m=n$, we have $n = \sqrt{2g - 2}$, and the theorem follows.
\end{proof}

\section{Trivalent surfaces with filling sets of systoles}
\label{sec:qi}
In this section, we show that any trivalent graph with the combinatorial metric (in which each edge has length one) coarsely resembles a hyperbolic surface with a filling set of systoles. The strategy is to replace each vertex of $G$ with a copy of a torus $Y$ with three boundary components, and to glue together these copies in a way prescribed by the edges of $G$. We work with these higher complexity building blocks $Y$, rather than pants, to give us room to fill the final surface  with systoles. In order to ensure that our constructed set of curves actually are systoles, we require that our graph $G$ have girth $6$ to begin with. Of course not all trivalent graphs have this property, but as we are only interested in graphs up to (uniform) quasi-isometry, we can use the following.

\begin{lemma}\label{lem:graphqi}
There exist absolute constants $a$, $b>0$ such that any trivalent graph $G$ is $(a,b)$-quasi-isometric to a trivalent graph of girth at least $6$.
\end{lemma}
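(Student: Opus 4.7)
\emph{Proof proposal.} The plan is to replace each edge of $G$ by a fixed finite ``bridge'' gadget of girth $\geq 6$, so that the resulting graph $G'$ is trivalent, has girth $\geq 6$, and is uniformly quasi-isometric to $G$, with all constants depending only on the gadget (hence absolute).

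First I would fix a small trivalent graph of girth $6$ with two distinguished ``ports'' of degree $2$. The Heawood graph $\Gamma$ is trivalent with girth exactly $6$, so pick any edge $\alpha = \{p_1, p_2\}$ of $\Gamma$ and let $K := \Gamma \setminus \alpha$. Then $K$ has girth $\geq 6$, the ports $p_1, p_2$ have degree $2$ while every other vertex still has degree $3$, and since $\alpha$ lay on a $6$-cycle of $\Gamma$ one has $d_K(p_1, p_2) = 5$. Now construct $G'$ as follows: for each edge $e = uv$ of $G$ (oriented arbitrarily), take a fresh disjoint copy $K_e$ of $K$ with ports $p_1^e, p_2^e$, delete $e$ from $G$, and add the two new edges $(u, p_1^e)$ and $(v, p_2^e)$. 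Each original vertex $u$ keeps degree equal to its $G$-degree, hence $3$; each port gains one new edge and becomes degree $3$; internal vertices of the $K_e$ are unchanged. So $G'$ is trivalent, and the construction is entirely local.

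For the girth, consider a cycle $\gamma$ in $G'$. If $\gamma$ lies inside a single $K_e$ then $|\gamma| \geq 6$. Otherwise $\gamma$ meets the original vertex set $V(G)$; since $\gamma$ is simple and the only edges of $G'$ leaving $K_e$ are incident to its two ports, $\gamma$ must enter each $K_e$ it uses through one port and exit through the other, contributing at least $d_K(p_1, p_2) + 2 = 7$ per gadget crossed. So $|\gamma| \geq 7$, and in both cases $|\gamma| \geq 6$. For the quasi-isometry, the same traversal argument shows that for any two original vertices $u, v$ one has $d_{G'}(u, v) = 7 \, d_G(u, v)$: a shortest $G$-path lifts to a $G'$-path of that length, and conversely any simple $G'$-path between original vertices fully traverses each gadget it enters, contributing $7$ per $G$-edge it employs. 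Finally every vertex of $G'$ lies within distance $\leq \diam(K) + 1$ of an original vertex, a constant depending only on $K$, so the inclusion $V(G) \hookrightarrow V(G')$ is a quasi-isometry with absolute constants.

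The step requiring care is the girth check: the naive attempt to just subdivide the edges of $G$ fails, because subdivision creates degree-$2$ vertices and patching them up by short local additions tends to produce short cycles. Using a single fixed gadget whose own girth is already $\geq 6$ and whose two ports are at distance $\geq 5$ inside it sidesteps both problems simultaneously and makes all constants manifestly independent of $G$.
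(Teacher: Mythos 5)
Your proof is correct but takes a genuinely different route from the paper's. The paper performs local surgery only where needed: for each cycle of length less than $6$, it subdivides one edge into a length-$9$ segment and grafts an octagonal graph onto the eight interior vertices via a specific pairing $(1,1),(2,4),(3,7),\ldots$, leaving the rest of $G$ untouched. You instead replace \emph{every} edge of $G$ uniformly by a fixed gadget $K$ (the Heawood graph minus an edge). The paper's approach is more parsimonious --- the modified graph agrees with $G$ away from short cycles, so the quasi-isometry is close to the identity map --- but verifying that the chosen pairing actually kills all cycles of length less than $6$, including any newly created ones, requires some case-checking that the paper leaves implicit. Your approach blows up $G$ by a uniform factor yet makes both verifications essentially mechanical: every simple cycle of $G'$ either sits inside a single $K_e$ (where girth is at least $6$) or fully traverses at least one gadget, contributing at least $d_K(p_1,p_2)+2=7$ per traversal, and $d_{G'}$ restricted to $V(G)$ equals $7\,d_G$. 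Both routes give constants independent of $G$. One small imprecision in your write-up: the phrase ``contributing $7$ per $G$-edge it employs'' should read ``at least $7$ per gadget traversed,'' since a non-minimal route inside a gadget costs more than $5$; the lower bound $d_{G'}(u,v)\ge 7\,d_G(u,v)$ still follows because the number of gadgets any simple $u$--$v$ path traverses is at least $d_G(u,v)$, and the matching upper bound comes from lifting a shortest $G$-path.
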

\begin{proof}Suppose that $G$ is a trivalent graph of girth less than 6. At any cycle of length less than 6, we replace one edge by a segment subdivided into nine segments. We label, in linear order, the 8 interior vertices with the integers $1,\ldots, 8$. Let $H$ be the octagonal graph whose eight vertices are cyclically labeled also by $1,\ldots, 8$. We attach a copy of $H$ at each of these newly created length nine segments of $G$ by attaching each of its eight labeled vertices to one of the labeled vertices of the segment of $G$, according to the pairings: 
$(1,1), (2,4), (3,7), (4,2), (5,5), (6,8), (7,3)$, and $(8,6)$. This produces a graph whose girth is at least $6$.

As these alterations to $G$ consist of local insertions of one of a finite possible number of finite graphs, the resulting graph is quasi-isometric to $G$ by constants which do not depend on the graph.
\end{proof}

We recall the statement we would like to prove. 

{\bf Theorem \ref{th:QItrivalent}.} {\em There exist absolute constants $A$, $B, \ell >0$ such that for any finite trivalent graph $G$ there exist $g\ge 2$ and $S\in {\bf X}_g^\ell$ such that $S$ and $G$ are $(A,B)$-quasi-isometric.}

\begin{proof} Using the previous lemma, we can assume that $G$ has girth at least 6.

Our building block is a surface $Y$ constructed from $12$ copies of a right-angled pentagon with side lengths, cyclically ordered, $s/2, s/6, s/4, b/4, c$. Specifying this relation between side-lengths gives us the following relations between $s$ and $b$:
\begin{eqnarray*} 
\sinh(s/2) \sinh(s/6) = \cosh(b/4) \\
\sinh(s/4) \sinh(b/4) = \cosh(s/2)
\end{eqnarray*}
This system of equations uniquely determines values for $b$ and $s$, namely that 
\begin{eqnarray*}
	s \approx 4.39 \quad \quad \quad
	b \approx 7.77
\end{eqnarray*}
For us, the main significance is that $b > s$. A similar system of equations determines that $c$ is $s/12$. This value of $c$ is what determines our choice to assume that $G$ have girth at least 6, as we shall see below.

\begin{figure}
  \centering
    \includegraphics[width=12cm]{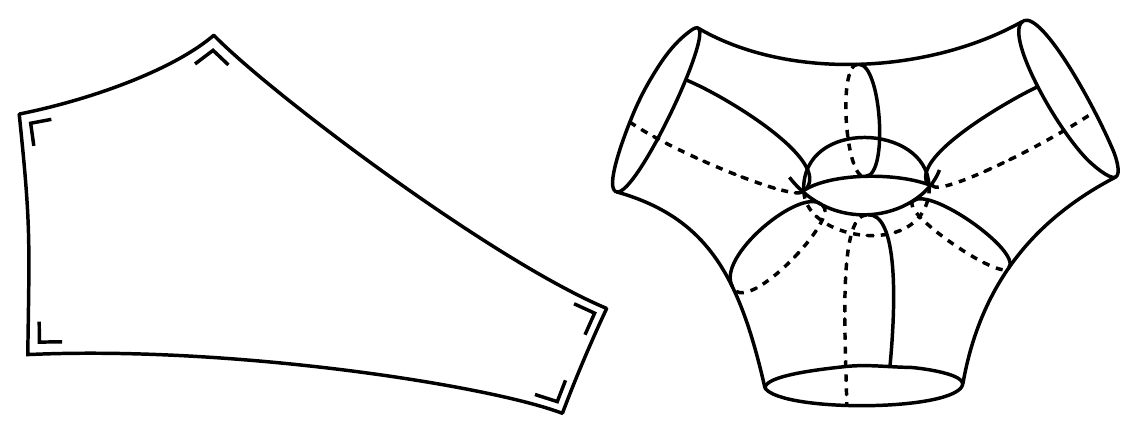}
  \caption{\footnotesize A torus with three boundary components (right) built out of 12 copies of a right-angled pentagon (left).}
  \label{fig:VertexSurface}
\end{figure}

The 12 pentagons are arranged so that $Y$ is a genus one surface with three boundary components, each of length $b$ (see Figure \ref{fig:VertexSurface}). Note that there is a pants decomposition of $Y$ into three pants, each having two boundary components of length $s$ and one of length $b$. There is an order three fixed point free isometry of $Y$ which cyclically permutes these pants.   Since $b > s$, the lengths of the boundary curves of $Y$ are greater than $s$. Whenever we refer to a pair of pants in $Y$, we are referring to the pants in this particular decomposition.  We refer to the copies of $Y$ used to make up the surface as {\em $Y$-pieces}.

\begin{figure}
  \centering
  \includegraphics[width=6.0cm]{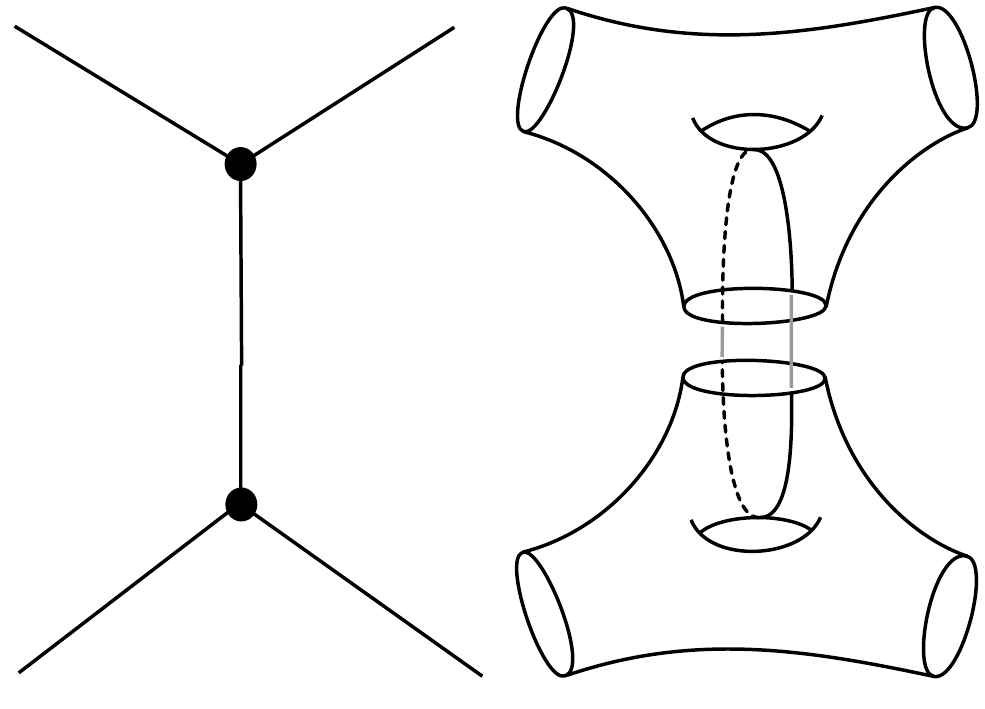}
  \caption{\footnotesize A section of the graph (left) and the corresponding gluing of $Y$-pieces (right).}\label{fig:QIsurface}
\end{figure}

For each vertex $v$, we label the outward directions of the edges from $v$ by $v_1, v_2, v_3$. For the corresponding copy $Y_v$ of the surface $Y$, we likewise label the three boundary components by $v_1, v_2,  v_3$. Now we attach the copies of $Y$ (referred to hereafter as {\em $Y$-pieces}), one for each label, as prescribed by the graph: if vertices $v,w$ of the graph are adjacent along directions $v_i$ and $w_j$, then the corresponding $Y$-pieces $Y_v, Y_w$ are glued together without twist along the boundary curves having the same labels. Specifically, by without twist we mean as in Figure \ref{fig:QIsurface}.

In Figure \ref{fig:SystoleSet} we indicate a collection $\mathcal{S}$ of geodesics which we claim to consist of systoles on $S$.  By construction, the curves in $\mathcal{S}$ all have length $s$.  That $\mathcal{S}$ fills $S$ is clear by the construction: the complement of these curves consists of hyperbolic polygons.  We now check systematically that every other simple closed curve on $S$ must have length at least $s$. 

\begin{figure}
  \centering
  \includegraphics[width=7.0cm]{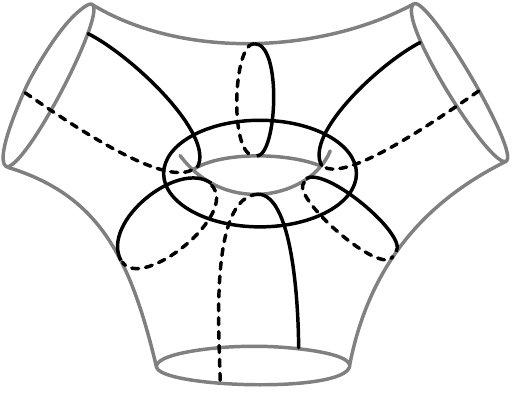}
  \caption{\footnotesize Putative systoles and arcs of systoles on a $Y$-piece, in boldface.}\label{fig:SystoleSet}
\end{figure}

A curve whose projection to $G$ contains a nontrivial cycle must have length at least $s$; this is ensured by the fact that the girth of $G$ is at least 6, together with the fact that the shortest distance between two boundary components of a $Y$-piece is $2c=s/6$. 

We are left to check those curves whose projections to $G$ contain no cycles. In this case, the projection is either a single vertex or a path, which is a subgraph of $G$ homeomorphic to a closed interval.   If the projection of the curve on $S$ to $G$ is a path, then its endpoints lie in distinct $Y$-pieces.

We start by describing some particular geodesic arcs in the $Y$-pieces. We say that a simple arc in a $Y$-piece is of type $\mathcal{O}$ if it lies in a single pair of pants, and if its endpoints lie in a single boundary component of the pants in the interior of the $Y$-piece. An arc is of type $\mathcal{P}$ if intersects exactly once each pair of pants in the $Y$-piece, with its endpoints contained in a single boundary component of the $Y$-piece. An arc is of type $\mathcal{Q}$ if it lies in a single pair of pants and its endpoints lie in the corresponding boundary of the $Y$-piece. Finally, an arc is of type $\mathcal{R}$ if it connects two distinct boundary components of the $Y$-piece. Representative examples of the different types of arcs are illustrated in Figure \ref{fig:Arctypes}.  

We note that there are other possible arcs; for instance, there exists an arc in the $Y$-piece with both endpoints in the same component of the boundary of the $Y$-piece but which intersects two different pairs of pants in the $Y$-piece.  However, such an arc contains an arc of type $\mathcal{O}$ as a subarc.  We have chosen the arcs of types $\mathcal{O}$, $\mathcal{P}$, $\mathcal{Q}$ and $\mathcal{R}$ because every curve on $S$ that projects to a path in $G$ contains a sufficient number of arcs of these types for us to estimate its length. 

\begin{figure}[h]
\leavevmode \SetLabels
\L(.62*.73) $\mathcal{Q}$\\
\L(.41*.8) $\mathcal{O}$\\
\L(.515*.31) $\mathcal{P}$\\
\L(.63*.595) $\mathcal{R}$\\
\endSetLabels
\begin{center}
\AffixLabels{\centerline{\epsfig{file =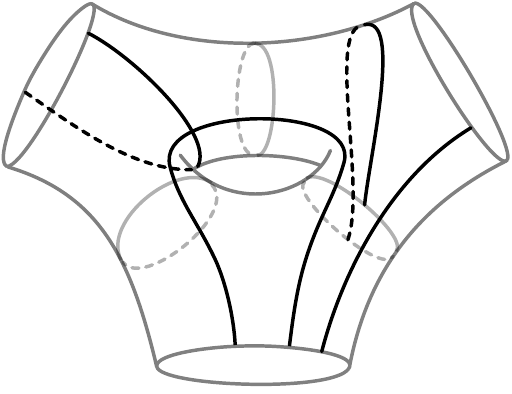,width=7.0cm,angle=0}}}
\vspace{-25pt}
\end{center}
\caption{Arcs on a $Y$-piece, labelled by their type.}\label{fig:Arctypes}
\end{figure}

We can bound from below the length of any simple closed curve $\gamma$ on $S$ whose projection to the graph is a path by bounding the length of any arc of type $\mathcal{O}$, $\mathcal{P}, \mathcal{Q}$, or $\mathcal{R}$.
\begin{itemize}
	\item An arc $\alpha$ of type $\mathcal{O}$: Suppose that the ends of $\alpha$ lie on a curve $\gamma$ in $\mathcal{S}$. Each component of $\gamma - \alpha$, together with $\alpha$, forms a simple closed curve. One such curve is homotopic to one of the curves of length $b$, the other homotopic to another curve in $\mathcal{S}$, which necessarily has length $s$. We obtain thus the inequality: 
$$ 2 \ell(\alpha) + s \geq b + s > 2 s$$
so that the length of $\alpha$ satisfies $ \ell(\alpha) > s $.

	\item An arc $\alpha$ of type $\mathcal{P}$: Note that the shortest path connecting the boundary components of length $s$ of a pair of pants has length $s/3$. The arc $\alpha$ has two such subarcs, and so we have 
$$ \ell(\alpha)> s/3 + s/3 = 2s/3 $$

	\item An arc $\alpha$ of type $\mathcal{Q}$: That $\ell(\alpha) \geq s/2$ follows immediately from the construction.
	\item An arc $\alpha$ of type $\mathcal{R}$: Such an arc must have length at least $s/6$. Note here that because the girth of the graph $G$ is at least 6, any curve containing an arc of type $\mathcal{R}$ must have length at least $s$.
\end{itemize}

If $\gamma$ is not properly contained in a $Y$-piece, then it contains at least two arcs each whose lengths are at least that of any arc from types $\mathcal{O}$, $\mathcal{P}, \mathcal{Q}$, or $\mathcal{R}$. Thus $\gamma$ in this case  has length at least $s$.

The remaining case is that of a simple closed curve $\gamma$  properly contained in one of the $Y$-pieces. If $\gamma$ intersects each pair of pants along arcs which connect distinct boundary components of the pants, then the length of $\gamma$ is bounded below by $s$. Otherwise $\gamma$ intersects pants along two disjoint simple arcs 
whose length is at least that of an arc of type $\mathcal{O}$. Then again, the length of $\gamma$ is at least $s$. 

We have thus established that any essential curve on $S$ has length at least $s$, and thus that the set of curves $\mathcal{S}$ consists of systoles. 
\end{proof}

We close this section by remarking that we can loosen the requirement that the valence of $G$ be exactly 3. Rather, we can stipulate that the valences of the vertices be uniformly bounded. A graph with bounded vertex valence is quasi-isometric to a trivalent graph, with quasi-isometry constants depending only on the bounds on the valences.
\section{$\ts$ is sparse in $\tri$}\label{sec:counting}

Recall that we define the set $\tri={\bf Y}_g^{[a,b]}$ of trivalent surfaces with bounded length pants decomposition as 
$${\bf Y}_g^{[a,b]} =\{\text{surfaces with a pants decomposition } \{\gamma_k\} {\text{ with }} \ell(\gamma_k) \in [a,b] \}$$

In the previous section, we found that any trivalent graph (and thus any surface in $\tri$) can be imitated, up to quasi-isometry with absolute constants, by a surface in the Thurston set $\mathbf{X}_{g'}$ where $g$ and $g'$ are comparable (but are not necessarily equal). In this section we show something somewhat tangent to this result: namely that $\tri \cap \ts$ is in some sense sparse inside of $\tri$. Specifically we investigate how well distributed points of $\ts$ are in $\tri$ as the genus $g$ increases.

In order to endow $\tri$ with a geometry we begin with the following observation.

{\it
There exist constants $0 < a < 2 \arcsinh (1) < b$ such that $\tri = {\bf Y}_g^{[a,b]}$ is a  path-connected subset of $\moduli$.}

The existence of such constants comes from the fact that the pants graph is connected and that elementary moves can be implemented for well chosen constants. As they are not terribly important in what follows we do not dwell on optimizing them. A simple computation, for instance, shows that $a=\frac{1}{10}$ and $b=10$ suffice.

As a path-connected subspace, $\tri$ can be equipped with the path metric coming from the induced metric we are considering on Teichm\"uller space. We focus our interest on two metrics on this space: the  Teichm\"uller metric and the Thurston metric. What follows holds for either choice, so let us suppose that we have chosen one of the two on Teichm\"uller space and denote the induced distance on $\tri$ by $d_{\tri}$. Note that if we knew $\tri$ were sufficiently convex with respect to either metric, this path metric $d_{\tri}$ would be close to the restricted metrics on Teichm\"uller space. However, little is known about the convexity of this set.

The metric structure we consider on $\tri$ admits a nice combinatorial description, in terms of the quotient $\MDP(\Sigma_g)$ of the {\em diagonal pants graph} of a closed orientable surface $\Sigma_g$ of genus $g\ge 2$ by the mapping class group. Rafi and Tao \cite{Rafi-Tao} refer to this set as the set of trivalent graphs endowed with the metric of simultaneous Whitehead moves. It can be viewed as the graph of isomorphism types of trivalent graphs of fixed size where two trivalent graphs share an edge if they can be related by elementary moves that can be realized simultaneously. We give a precise and alternative way of seeing it below.

We show the following relationship between $\MDP(\Sigma_g)$ and $\tri$.
						
\begin{theorem}\label{thm:qi}
There exist absolute constants $A,B$ such that for any genus $g\ge 2$, there exists an $(A,B)$ quasi-isometry between $\MDP(\Sigma_g)$ and $\tri$.
\end{theorem}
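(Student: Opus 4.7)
The plan is to exhibit an explicit candidate quasi-isometry $\Phi : \MDP(\Sigma_g) \to \tri$ and verify the defining inequalities with $g$-independent constants. For each vertex $[P]$ of $\MDP(\Sigma_g)$, i.e.\ an MCG-orbit of pants decompositions $P = \{\gamma_1, \ldots, \gamma_{3g-3}\}$ of $\Sigma_g$, let $\Phi([P])$ be the class in $\moduli$ of the hyperbolic structure whose Fenchel--Nielsen coordinates relative to $P$ are $\ell(\gamma_i) = \ell_0$ and $\tau(\gamma_i) = 0$, for a fixed $\ell_0 \in (a,b)$ such as $\ell_0 = 2\arcsinh(1)$. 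Different representatives of $[P]$ yield isometric surfaces, so $\Phi$ is well-defined; and $\Phi([P]) \in \tri$ because $\ell_0 \in [a,b]$.

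For coarse surjectivity and the upper Lipschitz bound, observe that any $X \in \tri$ has a pants decomposition $P'$ with all curves in the length range $[a,b]$; after shifting by Dehn twists (which lie in the MCG) we may further assume the twists of $X$ relative to $P'$ lie in $[0,b]$. The ``box'' of Teichm\"uller space parametrised by $\ell(\gamma_i) \in [a,b]$, $\tau(\gamma_i) \in [0,b]$ lies entirely in $\tri$, and its diameter in either the Thurston or Teichm\"uller metric is bounded by a $g$-independent constant $D_0$, since changing a single Fenchel--Nielsen coordinate within these ranges has bounded metric effect. Hence $d_{\tri}(X, \Phi([P'])) \leq D_0$. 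For the upper Lipschitz bound, it suffices to handle adjacent $[P], [P']$ in $\MDP$, which differ by a simultaneous collection of disjoint elementary moves. A single elementary move is supported in a four-holed sphere or one-holed torus and can be realised by a path in $\tri$ of absolutely bounded length: one continuously deforms the structure of that piece from using $\gamma_i$ to using $\gamma_i'$ as its short curve, keeping all other pants curves at length $\ell_0$. Since the supports are disjoint, simultaneous moves can be performed in parallel, giving an absolute bound on $d_{\tri}(\Phi([P]), \Phi([P']))$.

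The main obstacle is the lower Lipschitz bound $d_{\MDP}([P], [P']) \leq A \cdot d_{\tri}(\Phi([P]), \Phi([P'])) + B$. Given a path $\eta$ in $\tri$ of length $L$ joining $\Phi([P])$ to $\Phi([P'])$, subdivide it into $N = \lceil L/\delta \rceil$ pieces of length at most a fixed $\delta > 0$; at each subdivision point, pick a pants decomposition with curve lengths in $[a,b]$, yielding a sequence $[P] = [P_0], [P_1], \ldots, [P_N] = [P']$ in $\MDP$. The key lemma to establish is that for $\delta$ small enough (and $g$-independent), $d_{\MDP}([P_i], [P_{i+1}])$ is uniformly bounded. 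The leverage is that along a subpath of length $\delta$, each curve's length changes by at most a factor $e^{\delta}$ (Lipschitz property of the Thurston and Teichm\"uller metrics), so the curves of $P_i$ remain relatively short throughout, and bounded-length pants decompositions on nearby surfaces can be related by a bounded number of elementary moves via standard Bers-constant arguments. Summing then yields the desired linear bound, completing the quasi-isometry.
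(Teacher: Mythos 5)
Your overall scaffolding matches the paper's: define a map via the ``all lengths $\ell_0$, all twists $0$'' Fenchel--Nielsen assignment, prove coarse surjectivity, show adjacent vertices of $\MDP(\Sigma_g)$ have bounded-distance images in $\tri$, and then reverse-engineer a bound on $d_{\MDP}$ from short paths in $\tri$ by subdividing and projecting. The problem is the step you flag as ``the key lemma'' and then dismiss with ``standard Bers-constant arguments.'' That is precisely the hard part, and it is not standard: it is essentially Lemma \ref{lem:pantsdistance} of the paper, which takes a full page of careful work.

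Here is why your shortcut fails. After subdividing the path and applying the collar lemma, you indeed get two pants decompositions $P_i$, $P_{i+1}$ on the same surface whose mutual geometric intersection numbers are bounded by a constant $K$ independent of $g$. But bounded pairwise intersection numbers do not immediately give a $g$-independent bound on $d_{\MDP}(P_i, P_{i+1})$. Naively, you have $3g-3$ curves to adjust, each requiring a bounded number of elementary moves, which gives a distance bound of order $g$ in the ordinary pants graph. The entire point of the diagonal pants graph $\MDP$ is that elementary moves on disjoint complexity-one subsurfaces may be performed \emph{simultaneously}, and the savings from $O(g)$ down to $O(1)$ requires a graph-coloring argument (Brooks' theorem): partition the curves of $P_i$ to be moved into a bounded number $k_K$ of families whose associated subsurfaces $\Sigma_\alpha$ are pairwise disjoint, handle one family at a time via subsurface projections, and control how intersection numbers grow (exponentially, but in a $K$-controlled way) over the bounded number of stages. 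Your proposal contains no hint of this mechanism. A genuine Bers-constant argument is of no use here because the Bers constant itself grows linearly in $g$.

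A secondary gap: your assertion that the Fenchel--Nielsen ``box'' $\{\ell \in [a,b], \tau \in [0,b]\}$ has $g$-independent diameter in $\tri$, justified by ``changing a single Fenchel--Nielsen coordinate has bounded metric effect,'' is not a proof. One coordinate change is bounded, but there are $3g-3$ of them; summed naively this is again of order $g$. The paper's Lemma \ref{lem:net} handles this by the same graph-coloring device: partition the pants curves into $k_2$ classes lying on disjoint pants, and perform the deformations one class at a time (each class simultaneously), giving a path of length bounded by $k_2 L$ independently of $g$. You would need to import that argument, or equivalently argue that there is a single $g$-independently bi-Lipschitz homeomorphism between any two surfaces in the box respecting the pants decomposition. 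Either way, the justification as written does not suffice.

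So the approach is the paper's, but the two places where $g$-independence must actually be earned --- the box-diameter bound and especially the bounded-intersection-to-bounded-$\MDP$-distance lemma --- are both asserted rather than proved. The latter is the real content of the theorem.
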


The fact that one can bound distances in $\tri$ by distances in $\MDP(\Sigma_g)$ is an essential ingredient in the work of Rafi and Tao mentioned above. We describe the space $\MDP(\Sigma_g)$ more precisely. The usual pants graph $\mathrm{P}(\Sigma_g)$ is the graph whose vertices correspond to isotopy classes of pants decompositions of $\Sigma_g$; two vertices span an edge if they are related by a so-called {\em elementary move} (see Figure \ref{fig:elementarymoves}).

\begin{figure}
  \centering
  \includegraphics[width=8.5cm]{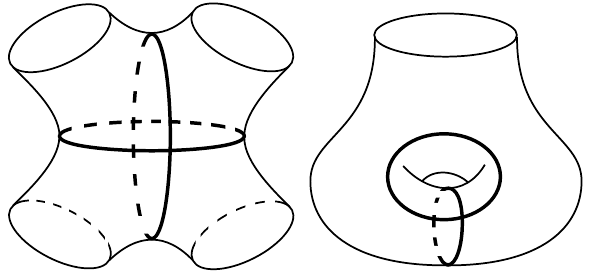}
  \caption{\footnotesize The two types of elementary moves between pants decompositions}
  \label{fig:elementarymoves}
\end{figure}

An elementary move is an exchange of a curve of a pants decomposition with another lying on the same complexity $1$ subsurface and which intersects the original curve minimally. (The {\em complexity} of a surface with boundary is the number of interior curves in a pants decomposition so a complexity $1$ subsurface is either a four holed sphere or a one holed torus.)

The {\it diagonal} pants graph $\mathrm{DP}(\Sigma_g)$ is obtained from $\mathrm{P}(\Sigma_g)$ by adding edges  whenever two pants decompositions are related by elementary moves that take place on disjoint complexity $1$ subsurfaces. The space $\MDP(\Sigma_g)$ is the quotient of $\mathrm{DP}(\Sigma_g)$ by the natural action of the (full) mapping class group. The result is a finite graph whose vertices are the topological types of pants decompositions (as opposed to isotopy classes), where two vertices span an edge if they are related by a set of elementary moves that can be simultaneously realized. All these spaces are naturally metric spaces when one assigns length $1$ to each edge. The space $\mathrm{DP}(\Sigma_g)$ is defined as above in \cite{Aramayona-Lecuire-Parlier-Shackleton}.

With this in hand, we now relate the set $\tri$ to $\ts$ using the induced distance $d_{\tri}$.

{\bf Theorem \ref{thm:main}.} {\em There exists an absolute constant $C>0$ such that a {\it random} surface in $\tri$ has distance in $\tri$ at least $C \log (g)$ from $\ts$.}

For the proof of Theorem \ref{thm:main}, we construct a discrete model that will imitate the geometry of $\tri$. In particular every point in $\tri$ is a fixed distance away from this discrete set and the discrete set is well distributed in $\tri$, i.e., it forms a net. This set as we shall see is a copy of the set of trivalent graphs with $2g-2$ vertices, or alternatively $\MDP(\Sigma_g)$, and so in particular it admits a counting measure. Although there are alternative ways of defining the notion of randomness, the ``random" in the above statement can be taken to be exactly the one coming from this measure. 

Let us now define the discrete set. To each topological type of pants decomposition we associate a surface in $\tri$ equipped with such a pants decomposition. We begin by choosing a constant $\varepsilon_0$. Following a choice of Fenchel-Nielsen coordinates associated to the pants decomposition, we set all pants curve lengths to be $\varepsilon_0$ and all twist parameters to be $0$. If $\varepsilon_0$ is small enough, these pants curves are necessarily systoles. A convenient value for $\varepsilon_0$ is $2 \arcsinh (1)$ (or smaller). Indeed, any curve that crosses a curve of length $\varepsilon_0$ for such a value of $\varepsilon_0$ is necessarily longer by the collar lemma. As such, a surface can have only one such pants decomposition and the pants curves are all systoles. Thus such values of $\varepsilon_0$ determine distinct points in moduli space for distinct topological types of pants decomposition. 

We select in this way a natural discrete subset $\trio$ of $\tri$ consisting of points in one-to-one correspondence with the vertices of $\MDP(\Sigma_g)$. A key step in our proof of the above theorem will establish that the set of points of $\trio$ forms a metric net for $\tri$. 

\subsection{The diagonal pants graph and trivalent surfaces}

In this section we seek to better understand the geometry of $\tri$ using the diagonal pants graph. Our main result is Theorem \ref{thm:qi}. We recall that by $\tri$ we mean ${\bf Y}^{[a,b]}_g$, where $0<a<\varepsilon_0 = 2 \arcsinh (1) <b$ are constants such that ${\bf Y}^{[a,b]}_g$ is path-connected. The constants that arise in what follows will depend on this initial choice of $a$ and $b$, but the essential point is that once $a$ and $b$ are chosen, all subsequent constants are fixed, independent of genus. In all that follows, we are thinking of $\tri$ as a metric space equipped with the induced path metric $d_{\tri}$.

We begin with the following lemma.

\begin{lemma}\label{lem:net} There exists an absolute constant $R>0$ such that any point of $\tri$ lies in a ball of radius $R$ around a point in $\trio$.
\end{lemma}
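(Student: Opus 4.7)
The plan is to exhibit, for each $X\in\tri$, an explicit short path in $\tri$ from $X$ to a point of $\trio$. Choose a pants decomposition $P=\{\gamma_1,\ldots,\gamma_{3g-3}\}$ witnessing $X\in\tri$, so each $\ell_X(\gamma_k)\in[a,b]$. Let $X_0\in\trio$ be the canonical representative of the topological type $[P]$ constructed above: all pants lengths equal to $\varepsilon_0=2\arcsinh(1)$ and all twist parameters equal to $0$.

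First, I would set up Fenchel-Nielsen coordinates relative to $P$. Because a point in $\moduli$ determines its twist parameters only modulo $\ell(\gamma_k)\Z$ (powers of the Dehn twist about $\gamma_k$ belong to the mapping class group and so act trivially on $\moduli$), after choosing an appropriate lift to Teichm\"uller space we may assume the twists of $X$ satisfy $|\tau_k|\le\ell_X(\gamma_k)/2\le b/2$. I would then define the path $X_t$, $t\in[0,1]$, by linearly interpolating the lengths from $\ell_X(\gamma_k)$ to $\varepsilon_0$ and the twists from $\tau_k$ to $0$. Since $\varepsilon_0\in(a,b)$ and the length interpolation is convex, $\ell_{X_t}(\gamma_k)\in[a,b]$ throughout, so $X_t\in\tri$ for every $t$.

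The second step is to bound the length of this path in whichever metric (Teichm\"uller or Thurston) we have fixed. The key observation is that the total deformation can be realized by a single homeomorphism whose support is the disjoint union of standard collar neighborhoods of the $\gamma_k$; by the collar lemma each such collar has width bounded below by a constant depending only on $a$ and $b$. Each local deformation changes the length of $\gamma_k$ by a bounded multiplicative factor in $[a/b,b/a]$ and shears by at most $b/2$, and so can be realized by an explicit homeomorphism of the collar whose Lipschitz constant is controlled by a function of $a,b,\varepsilon_0$ alone. Because the collars are pairwise disjoint and the map is the identity outside of them, the Lipschitz constant of the simultaneous deformation is the \emph{maximum}, not the sum, of the individual constants, and hence independent of $g$. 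This yields a uniform upper bound on the Thurston distance from $X$ to $X_0$ along the path, and by the standard comparison of the Thurston and Teichm\"uller metrics on the thick part, also on the Teichm\"uller distance.

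The main technical obstacle is the Lipschitz estimate for a single collar deformation: one must choose an explicit model for "change the core length by a bounded factor" and for "shear by a bounded amount" that is supported in a collar of width bounded below, and verify that the resulting Lipschitz constant depends only on these bounded parameters. The disjointness of the collars of distinct pants curves is exactly what prevents this estimate from degrading with $g$ when one runs all $3g-3$ deformations in parallel, and is the heart of why the radius $R$ can be chosen to be absolute.
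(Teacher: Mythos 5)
Your strategy — linearly interpolate Fenchel--Nielsen coordinates and argue that the resulting deformation has a $g$-independent Lipschitz bound because it is the ``max, not the sum'' of local contributions — is in spirit the same as the paper's (indeed both are building a path in $\tri$ of universally bounded length to $\trio$), but you try to collapse what the paper does in $k_2$ stages into one simultaneous deformation, and the mechanism you invoke to justify this has a genuine error.

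The problem is the claim that the total deformation ``can be realized by a single homeomorphism whose support is the disjoint union of standard collar neighborhoods of the $\gamma_k$.'' This is true for the \emph{twist} part (a Fenchel--Nielsen shear is the identity outside a collar), but it is false for the \emph{length} part. Changing the length of a pants curve $\gamma_k$ changes the hyperbolic structure on each adjacent pair of pants globally: the seam lengths, the shapes of the right-angled hexagons, and the geometry of the thick part of the pants all depend on the boundary lengths. In particular, the complements $X\setminus\bigcup C_k$ and $Y\setminus\bigcup C_k'$ are \emph{not} isometric hyperbolic surfaces-with-boundary, so no homeomorphism between $X$ and $Y$ can be an isometry off the collars. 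Your later sentence, asserting that the change of $\ell(\gamma_k)$ by a bounded multiplicative factor ``can be realized by an explicit homeomorphism of the collar,'' is exactly the step that fails.

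Your ``max instead of sum'' intuition is salvageable, but it must be phrased at the level of pairs of pants, not collars: build the map piecewise, pair of pants by pair of pants, with a Lipschitz constant that is bounded by compactness of the set of pants with boundary lengths in $[a,b]$, take care to make the boundary parametrizations compatible, and then correct the twist discrepancies by shears supported in collars. The combined Lipschitz constant is then the maximum of the per-pants constants times the shear constant, both of which depend only on $a,b,\varepsilon_0$. The paper instead avoids these compatibility issues by using Brooks' theorem to color the curves so that the deformations within each color class occur on pairwise disjoint complexity-one subsurfaces with \emph{fixed boundary lengths}; each such deformation really is supported inside that subsurface, and the bound follows from compactness of a family of paths in those low-complexity moduli spaces. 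The paper's iterative scheme is technically cleaner precisely because fixing the boundary makes the disjoint-support argument literally correct; your one-shot version can work, but not via the collar-support claim as written.
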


\begin{proof}
We construct a path of uniformly bounded length entirely contained in $\tri$ between any point in $\tri$ and an appropriately chosen point in $\trio$. We use the following fact which follows from Brooks' theorem in graph coloring.

{\it For every $n\geq 2$ there exists a constant $k_n$ such that the vertices of any trivalent graph can be colored by $k_n$ colors in such a way that any two vertices of the same color are at least distance $n$ apart.}

From this we can deduce the following.

{\underline{Observation:}} {\it A pants decomposition $P$ can be partitioned into at most $k_2$ sets $\Gamma_i$, $i=1,\hdots, k_2$, such that any two curves in the same $\Gamma_i$ lie on distinct pairs of pants.}

For a chosen point in $\tri$, consider a pants decomposition $P$ with curve lengths $\ell_k\in [a,b]$, $k=1,\hdots,3g-3$, and apply the above observation.
For each curve $\gamma\in \Gamma_i$, the union of all pairs of pants it belongs to is a subsurface (either a four holed sphere or a one holed torus). We shall now deform this subsurface in its moduli space while keeping the lengths of each of the curves in its boundary fixed. Specifically, if the Fenchel-Nielsen coordinates with respect to the pants decomposition $P$ of the curve $\gamma$ are $(\ell_k,t_k)$, we use an efficient path in Fenchel-Nielsen coordinates that terminates at $(\varepsilon_0, 0)$.  Here, by an {\em efficient path}, we mean a path of minimal length that remains in the $\varepsilon$-thick part of moduli space. 

We apply this procedure simultaneously to all curves in $\Gamma_i$ (while keeping the lengths and twist parameters of the curves that are not in $\Gamma_i$ fixed). We then proceed to the following set $\Gamma_{i+1}$ and perform the same operation. Starting the procedure with $\Gamma_1$ and terminating with $\Gamma_{k_2}$, we see that the concatenation is the desired path ending at a point at $\trio$. 

Observe that there is an upper bound $L$ on the length of the paths in the moduli spaces of the complexity $1$ subsurfaces that we described above. This follows from a compactness argument. Specifically, because boundary curves have lengths that vary in $[a,b]$ for each $i$, $1 \leq i \leq k_2$, there is a compact set of such moduli spaces. Furthermore, in each of these moduli spaces, there is a compact set of such paths (the initial point must have length and twist in given intervals).  By compactness, this gives the upper bound $L$ (which depends only on $a$ and $b$) on each of the individual paths. Thus the total length of the path is bounded by $R:=k_2 L$.
 \end{proof}
 
Consider surfaces $y_1,y_2\in \trio$ that, via the canonical correspondence with vertices in $\MDP(\Sigma_g)$,  share an edge in $\MDP(\Sigma_g)$. (In the sequel, we will simply say that $y_1$ and $y_2$ {\it share an edge}.) Observe that in $\MDP(\Sigma_g)$, curves that are related via an elementary move must lie in a four holed sphere and not in a one holed torus, as there is only one type of topological non-trivial simple closed curve in the latter.

The surfaces $y_1$ and $y_2$ are geometrically close in the following sense. Consider a curve $\gamma$ of length $\varepsilon_0$ on $y_1$ that is not of the same length on $y_2$. This curve lies in a four holed sphere with boundary curves of length $\varepsilon_0$ on both $y_1$ and $y_2$. It also intersects a curve $\gamma'$ of length $\varepsilon_0$ on $y_2$. Up to action of the mapping class group, we can take $\gamma$ to be a curve of minimal length on $y_2$ among all curves with these same topological properties. It is easy to see that this minimal length only depends on $\varepsilon_0$; it is computable but of no particular interest, other than being independent of genus. 

With that observation in hand, the same proof for Lemma \ref{lem:net} above shows us that the following holds.

\begin{lemma}\label{lem:qi1} There exists an absolute constant $R>0$ such that for all $g\ge 2$ and all pair of surfaces $y_1$ and $y_2$ in $\trio$ that share an edge, 
$d_\tri(y_1,y_2) <R$.\qed
\end{lemma}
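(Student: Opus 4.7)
The plan is to construct, for any pair of surfaces $y_1,y_2\in\trio$ sharing an edge in $\MDP(\Sigma_g)$, an explicit path in $\tri$ of uniformly bounded length joining them, directly paralleling the proof of Lemma \ref{lem:net}. Such an edge records a finite collection of elementary moves performed simultaneously on pairwise disjoint complexity-$1$ subsurfaces $\Sigma_1,\dots,\Sigma_m$; by the observation preceding the lemma, no edge of $\MDP(\Sigma_g)$ is realized on a one-holed torus, so each $\Sigma_j$ is a four-holed sphere. Outside these subsurfaces, the pants decompositions defining $y_1$ and $y_2$ agree, and all pants curves have length $\varepsilon_0$ with zero twist in both configurations.

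For each $j$, the four boundary curves of $\Sigma_j$ have length $\varepsilon_0$ in both $y_1$ and $y_2$, and the two surfaces differ only in which interior pants curve of $\Sigma_j$ is set to $\varepsilon_0$ with zero twist. Following the strategy of Lemma \ref{lem:net}, I choose an efficient path in the moduli space of $\Sigma_j$ with boundary lengths fixed at $\varepsilon_0$, connecting the two configurations while remaining in the thick part of that moduli space; in particular, at each time some pants curve of $\Sigma_j$ has length in $[a,b]$. By exactly the compactness argument of Lemma \ref{lem:net}---the relevant thick part is a compact subset of a finite-dimensional moduli space with fixed boundary---each such path can be chosen of length at most a constant $L'$ depending only on $a$, $b$, and $\varepsilon_0$.

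Because the $\Sigma_j$ are pairwise disjoint, these $m$ local deformations can be carried out simultaneously while the rest of the surface is held fixed, and the concatenation with the identity outside stays in $\tri$ throughout. Taking $R:=L'$ then gives the stated bound. The main obstacle I expect is precisely this last point: one must ensure that the constant does not grow with $m$ (and therefore with $g$). A sequential implementation of the moves would only yield a bound of order $mL'$, which is linear in $g$. To avoid this, I will use that for both the Thurston and Teichm\"uller metrics, deformations supported on disjoint subsurfaces of a fixed ambient surface compose with metric distortion equal to the \emph{maximum}, rather than the sum, of the individual distortions (via the max-of-Lipschitz-constants property for the Thurston metric and the analogous property of quasiconformal dilatations for the Teichm\"uller metric). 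This is what upgrades the per-subsurface bound $L'$ into a genuinely absolute constant $R$.
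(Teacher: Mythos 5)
Your proof is correct and takes essentially the same route as the paper, which for this lemma simply refers to the argument of Lemma~\ref{lem:net}: deform simultaneously on the disjoint complexity-$1$ subsurfaces via efficient paths in their (compact, genus-independent) moduli spaces, with the constant independent of $g$ because the distortion of simultaneous disjoint deformations is the maximum, not the sum, of the local distortions. You rightly make explicit this last point, which the paper uses only tacitly in both Lemma~\ref{lem:net} and here; and your situation is in fact slightly simpler than Lemma~\ref{lem:net}, since the moves defining an edge of $\MDP(\Sigma_g)$ already occur on pairwise disjoint four-holed spheres, so the Brooks'-theorem coloring step used there is not needed.
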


This establishes that distances in $\MDP(\Sigma_g)$ bound distances in $\tri$. For the converse, we present the following lemma.  We prove it here in greater generality than is required, as we feel it may be of  independent interest.  We use $\ii(\alpha, \beta)$ to denote the {\em intersection number} between the curves $\alpha$ and $\beta$.  For a collection $Q$ of curves, we define $\ii(\alpha, Q)$ to be the sum of the $\ii(\alpha, q)$ over $q\in Q$. 

\begin{lemma}\label{lem:pantsdistance}
Given $K>0$ there exists a constant $C_K$ such that for any topological type of surface $\Sigma$, the following holds. Let $P$, $Q$ be pants decompositions of $\Sigma$ for which 
$$\ii(\alpha,Q)\leq K {\text{ and }}\ii(\beta,P)\leq K$$
for all $\alpha\in P$ and for all $\beta\in Q$. Then 
$$d_{\DP(\Sigma)}(P,Q) \leq C_K.$$
\end{lemma}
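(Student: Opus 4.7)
The plan is to build a path of length depending only on $K$ in $\DP(\Sigma)$ from $P$ to $Q$ by decomposing the transition into a bounded number of rounds and parallelizing many disjoint elementary moves within each round, exploiting the diagonal structure of $\DP$ in the spirit of the coloring argument of Lemma \ref{lem:net}.

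For each $\alpha \in P$, let $Y_\alpha \subset \Sigma$ be the complexity-one subsurface consisting of the (at most two) pairs of pants of $P$ meeting $\alpha$. The hypotheses $\ii(\alpha,Q)\leq K$ and $\ii(\beta,P)\leq K$ imply that each of the at most four boundary components of $Y_\alpha$ is crossed by $Q$ at most $K$ times, so that $Q \cap Y_\alpha$ is a collection of arcs whose topological type relative to $\partial Y_\alpha$ lies in a finite list of cardinality bounded by a function of $K$ alone. Because the pants graph of each complexity-one surface is the connected Farey graph, a uniform finiteness argument then produces $N=N(K)$ so that, using at most $N$ elementary moves supported in $Y_\alpha$, the curve $\alpha$ can be replaced by a curve $\alpha'$ in $Y_\alpha$ either lying in $Q$ or satisfying $\ii(\alpha',Q) < \ii(\alpha,Q)$.

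Next I parallelize. Form the ``conflict graph'' whose vertices are the subsurfaces $Y_\alpha$ and whose edges connect those sharing a pair of pants of $P$. Each pair of pants is contained in at most three such $Y_\alpha$'s, so this graph has uniformly bounded degree and by Brooks' theorem it admits a proper coloring with an absolute number $c$ of colors. Elementary moves supported in disjoint complexity-one subsurfaces can be performed simultaneously as a single edge of $\DP(\Sigma)$; hence one color class contributes at most $N$ edges, and a full round of all $c$ colors contributes at most $cN$ edges and strictly decreases the number of $\alpha\in P$ not in $Q$ (or the total $\ii(P,Q)$). Iterating at most $M(K)$ times produces a path of total length $cN(K)M(K) =: C_K$.

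The main obstacle is controlling the iteration uniformly in the genus. After a round of parallel surgeries, the per-curve intersection with $Q$ of the modified pants decomposition need not still be bounded by $K$, and naively the worst case could worsen with each iteration. The point is that the combinatorial type of $(P \cap Y_\alpha, Q \cap Y_\alpha)$ with the given intersection bounds ranges over a finite set whose size depends only on $K$, so the inflation of the intersection constant per round, and hence the number $M(K)$ of rounds needed, both depend only on $K$. Making this compactness-flavored argument fully quantitative, and in particular choosing the local moves so that the hypothesis is preserved (up to a controlled constant), is the most delicate part of the proof.
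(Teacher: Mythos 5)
There is a genuine gap at the heart of your local step, namely the claim that one can always find a curve $\alpha'$ in the complexity-one subsurface $Y_\alpha$ with $\alpha'\in Q$ or $\ii(\alpha',Q)<\ii(\alpha,Q)$. This need not hold. For example, take $Y_\alpha$ to be a four-holed sphere with boundary curves $c_1,c_2,c_3,c_4\in P$, with $\alpha$ separating $\{c_1,c_2\}$ from $\{c_3,c_4\}$, and suppose $Q$ meets $Y_\alpha$ in four disjoint arcs of types $c_1c_2$, $c_3c_4$, $c_1c_3$, $c_2c_4$ (think of the four sides of a square). These arcs fill $Y_\alpha$, so no essential interior curve is disjoint from $Q\cap Y_\alpha$, hence no curve of $Q$ lies in $Y_\alpha$; and every essential curve in $Y_\alpha$ separates the boundary into two pairs and must therefore cross at least two of the four arcs, so $\min_{\alpha'\subset Y_\alpha}\ii(\alpha',Q)=2$. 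If $\ii(\alpha,Q)=2$, your iteration is stuck at $\alpha$ even though $\alpha\notin Q$. The difficulty is structural: you are trying to move a single $P$-curve inside a subsurface whose boundary lies in $P$, so the target $Q$ can present an arbitrary finite arc system to $Y_\alpha$ and there is no guarantee of a monotone quantity to decrease. (A secondary issue is that your stated termination criterion---decreasing the number of curves of $P$ not in $Q$, or decreasing $\ii(P,Q)$---gives a bound of order $g$, not $K$, on the number of rounds; one would need to argue that the maximal per-curve intersection drops each round, which is exactly the step that fails.)

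The paper resolves this by working on the other side of the duality. For $\alpha\in P\setminus Q$ it considers the smallest subsurface $\Sigma_\alpha$ with $\partial\Sigma_\alpha\subset Q$ that contains every curve of $Q$ meeting $\alpha$; this $\Sigma_\alpha$ has complexity at most $K$ (not complexity one). Inside $\Sigma_\alpha$ one explicitly constructs, via iterated subsurface projections of arcs of $P$, a pants decomposition $Q_\alpha$ of $\Sigma_\alpha$ that \emph{contains} $\alpha$ and whose intersection with $Q$ is bounded by an explicit function $f(K)$; then a finiteness argument over the (finitely many) topological types of surfaces of complexity $\le K$ together with pants decompositions of bounded intersection shows $d_{\DP(\Sigma_\alpha)}(Q|_{\Sigma_\alpha},Q_\alpha)\le D(K)$. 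Since $\partial\Sigma_\alpha\subset Q$, this modification of $Q$ leaves $Q$ unchanged outside $\Sigma_\alpha$, so the curve $\alpha$ is acquired and then retained. The coloring/parallelization and the bound on the number of rounds then follow because each of the boundedly many rounds incorporates an entire color class of $P$-curves, rather than merely decreasing an intersection count. In short, the paper modifies $Q$ over a $Q$-bounded subsurface of complexity $\le K$ toward an explicitly constructed target containing $\alpha$, whereas you modify $\alpha$ inside a $P$-bounded complexity-one subsurface with no reliable target, and that is where the argument breaks.
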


\begin{proof}
We  begin with a rough description of the proof: close to a curve $\alpha$ in $P$, we swap the curves of $Q$ which intersect $\alpha$ with ``nearby" curves that do not. We can do this process simultaneously on curves of $P$ that are far enough apart. The resulting pants decomposition deviates from $Q$ by some bounded distance $D(K)$ dependent only on $K$. We repeat this process along such subsets of curves in $P$ that intersect curves from $Q$ until we reach $P$ itself; through this process we can find the bound $C_K$.

Formally, the construction is as follows: 

For $\alpha\in P\setminus Q$, we denote by $\Sigma_\alpha$ the smallest subsurface of $\Sigma$ for which, if $\beta\in Q$ and $\ii(\alpha,\beta)\neq 0$, then $\beta\subset \Sigma_\alpha$. Observe that $\partial \Sigma_\alpha \subset Q$ and that the curves of  $Q$ that intersect $\alpha$ form a pants decomposition of $\Sigma_\alpha$. It follows that the complexity of $\Sigma_\alpha$ is bounded by $K$. 

\begin{figure}[h]
\leavevmode \SetLabels
\L(.04*.65) $\alpha$\\
\L(.4*.84) $a$\\
\L(.46*.74) $\pi(a)$\\
\endSetLabels
\begin{center}
\AffixLabels{\centerline{\epsfig{file =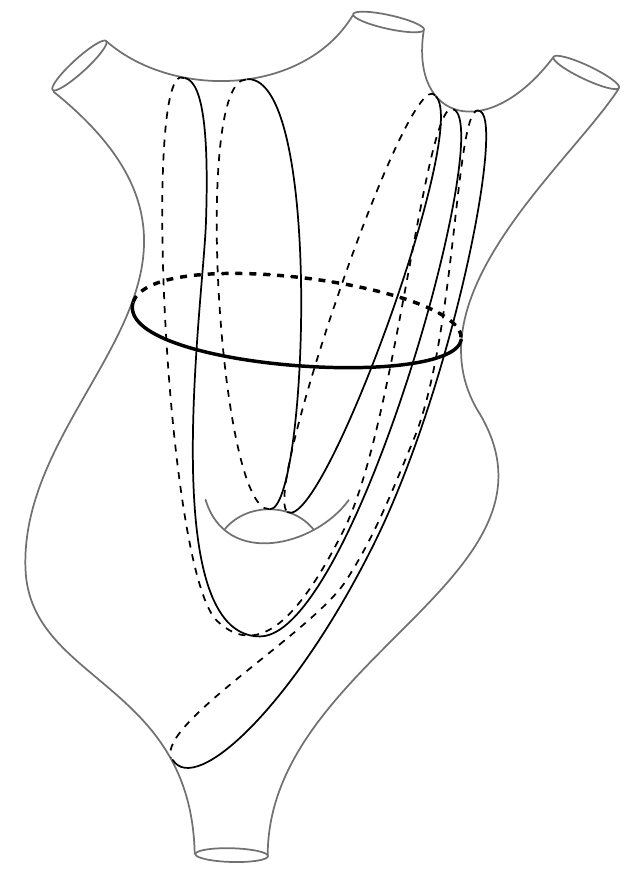,width=5.0cm,angle=0} \epsfig{file =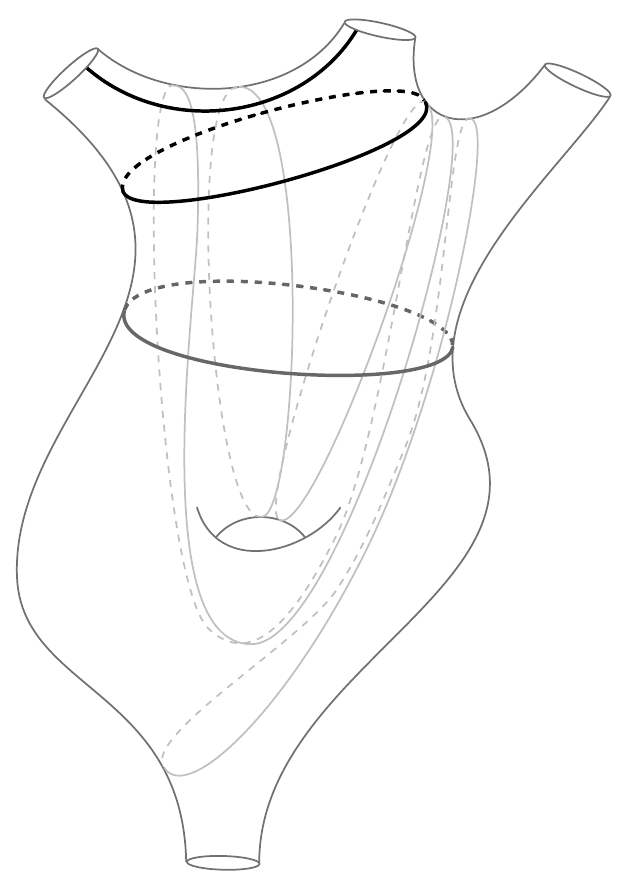,width=5.0cm,angle=0} \epsfig{file =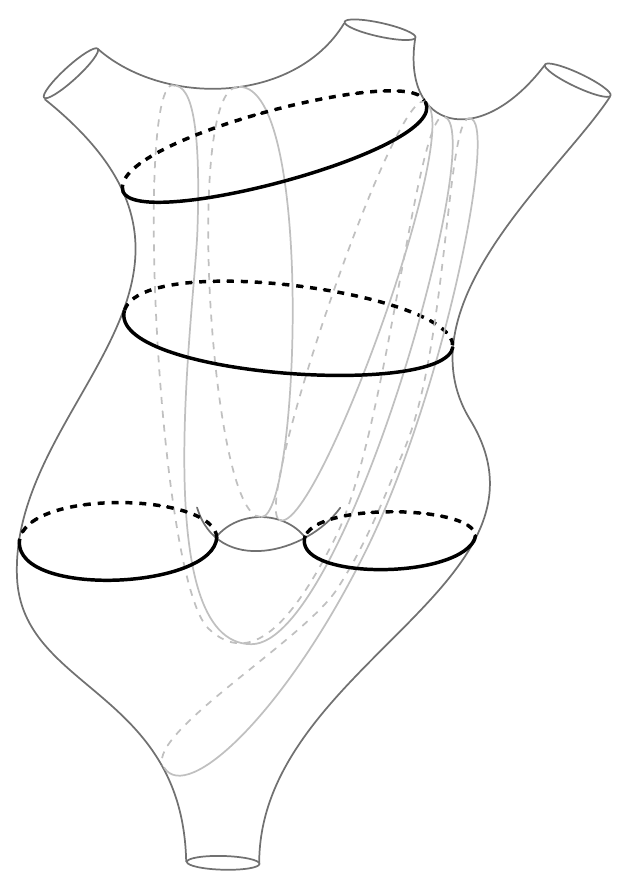,width=5.0cm,angle=0}}}
\vspace{-25pt}
\end{center}
\caption{An example of $\Sigma_\alpha$, $\pi(a)$ and finally $Q_\alpha$}\label{fig:SigmaAlpha}
\end{figure}

Now consider an arc $a$ of a curve of $P$ that essentially intersects $\Sigma_\alpha$. Denote by $\pi(a)$ the subsurface projection of $a$ to $\Sigma_\alpha$: it consists of either one or two curves that, together with the boundary of $\Sigma_\alpha$, bound a pair of pants. Notice that $\ii(\pi(a), \alpha)=0$. 

It is possible that no such arc exists: this means that all boundary curves of $\Sigma_\alpha$ are also curves of $P$. In that event we take a curve $a\in P$ that forms a pair of pants with boundary curves of $\Sigma_\alpha$ and set it to be $\pi(a)$. 

We apply the same process to the subsurface obtained from $\Sigma_\alpha$ by removing the pair of pants bounded by $\partial \Sigma_\alpha$ and $\pi(a)$. Iterating this procedure gives a sequence of pants whose boundary curves, together with $\alpha$, form a pants decomposition of $\Sigma_\alpha$. For future reference we denote $\Sigma_\alpha^k$ be the subsurface obtained by removing the first $k$ pants in this sequence. For instance $\Sigma_\alpha^0 = \Sigma_\alpha$. 

As there are at most $K$ pants in such a decomposition, this process ends in at most $K$ steps by a pants decomposition $Q_\alpha$ of $\Sigma_\alpha$.

We now examine intersection numbers throughout this process (and we won't be very scrupulous with our bounds as our only goal is to show that the bound only depends on $K$). Because the curve containing our initial arc or curve $a$ intersects $Q$ at most $K$ times, we have 
$$
\ii(\pi(a),Q)\leq 2K.
$$
Now suppose that we have an arc $a'$ of a curve in $P$ which intersects $\Sigma_\alpha^1$ essentially. Then we have 
$$
\ii(\pi(a'),Q) \leq 2K + 4K
$$
The $2K$ term bounds intersection coming from the intersection between $a'$ and $Q$. Above we bounded the intersection between $\pi(a)$ and $Q$ by $2K$ and these intersections contribute at most twice to the intersection of $\pi(a')$ with curves of $Q$ (as this projection could wrap around $\pi(a)$). This is the reason for the term $2(2K) = 4K$. Continuing in this way, we find that for any curve $\gamma \in Q_\alpha$
$$
\ii(\gamma,Q) \leq \sum_{i=1}^{K-1} 2^i K
$$

We now compare  $Q_\alpha$  to the restriction of $Q$ to $\Sigma_\alpha$. We claim that the two pants decompositions have a bounded distance $D(K)$ in $\DP(\Sigma_\alpha)$ which depends only on $K$. This follows from a finiteness argument as follows. Using the above estimate and the fact that the complexity of $\Sigma_\alpha$ is bounded by $K$, the total intersection numbers between $Q$ and $Q_\alpha$ are bounded by a function $f(K)$ of $K$ which can be taken to be
$$
f(K) = \sum_{i=1}^{K-1} 2^i K^2.
$$
For $k\leq K$, there are only a finite number of topological types of surfaces of complexity $k$. On each of these types, there are only a finite number of topological types of pants decompositions. For a given pants decomposition, there are only a finite number of isotopy classes of pants decompositions that intersect it at most $f(K)$ times, up to Dehn twists around its curves; this can be seen with a simple constructive argument.

Now we turn this local construction into a global construction and for this we begin by using the observation from the proof of Lemma \ref{lem:net}. Specifically, we partition $Q$ into $m\leq k_K$ multicurves $\Gamma_i$, $i=1,\hdots,m$, such that any two distinct curves in a given $\Gamma_i$ are distance at least $K$ apart on the associated trivalent graph. Note that for distinct $\alpha,\alpha'\in \Gamma_i$, the surfaces $\Sigma_{\alpha}$ and $\Sigma_{\alpha'}$ are disjoint. We begin with $\Gamma_1$ and apply the local construction above to each $\alpha\in \Gamma_1$. The result is a pants decomposition $Q_1$ which contains $\Gamma_1$. As we can make local moves simultaneously, the above discussion for the local construction tells us that there exists a function $D(K)$ depending only on $K$ so that 
$$
d_{\DP(\Sigma)}(Q,Q_1)\leq D(K).
$$
We now apply the same process to $Q_1$, this time with the multicurve $\Gamma_2$. Note that at this stage of the process, the curves of $Q_1$ might have larger intersection number with $P$ than those of $Q$, but as explained above, the function $f(K)$ bounds these intersection numbers. Also notice that curves belonging to $Q_1 \cap P$ are stable during the construction. We now obtain a new pants decomposition $Q_2$ at distance at most $D(f(K))$ from $Q_1$ and we can again repeat the process. At step $i$, we obtain a pants decomposition $Q_i$ whose curves intersect $P$ at most $f^i(K)$ times and at a distance at most $D(f^i(K))$ from $Q_{i-1}$. At step $m$, we obtain $Q_m=P$, and by the above estimates
$$
d_{\DP(\Sigma)}(P,Q) \leq D(K)+D(f(K))+\cdots+D(f^m(K))=:C_K
$$
This proves the lemma.
\end{proof}

As a consequence of this lemma, observe that we have a well-defined projection from $\tri$ to $\MDP(\Sigma_g)$ as follows. For any point $y\in \tri$ we choose a point $y'$ in $\trio$ distance at most $R$ from $y$ (measured using $d_\tri$). This $y'$ - or more precisely the pants decomposition of $\Sigma_g$ corresponding to a a vertex of $\MDP(\Sigma_g)$ obtained via the canonical map from $\trio$ - is the projection of $y$. The existence of such a $y'$ follows from Lemma \ref{lem:net}. There may be more than one such to choose from, but any two are within $d_\tri$ and Teichm\"uller distance (or Thurston distance) $R^2$ from each other. 

Now consider  two such $y_1$ and $y_2$ in $\trio$ at distance at most $R^2$ from one another: we claim that their distance in $\MDP(\Sigma_g)$ is small. Indeed, observe that any short curve $\gamma$ (of length $\varepsilon_0$) of $y_1$ has length at most $\varepsilon_0 e^{R^2}$ on $y_2$, otherwise the two surfaces would be at least $e^{R^2}$ lipschitz apart, a contradiction.  Now by the collar lemma, the curve $\gamma$ intersects the short pants decomposition of $y_2$ at most some function $I(R)$ of $R$ times. By the above lemma we know that $d_{\MDP(\Sigma_g)}(y_1, y_2) \leq C_{I(R)}$. 

Therefore, all choices of $y'$ are the same up to a universally bounded additive error. This allows us to define a map:
 $$
 \Pi:\tri\to \MDP(\Sigma_g).$$
 
We will see that this map $\Pi$ is in fact a quasi-isometry between $\tri$ and its image.  Assuming for the moment that $\Pi$ is indeed a quasi-isometry, then as $\Pi(\tri)$ is uniformly dense in $\MDP(\Sigma_g)$ by the previous lemma, we have proven Theorem \ref{thm:qi}. The fact that $\Pi$ is a quasi-isometry follows from Lemma \ref{lem:qi1} and the following lemma.

\begin{lemma}\label{lem:upperbound}
There exists an absolute constant $C>0$ such that for any $g\ge 2$ and all pants decompositions $P_x$, $P_y$ of $\Sigma_g$ such that $P_x=\Pi(x)$ and $P_y=\Pi(y)$ the following holds:
$$
d_{\MDP(\Sigma_g)}(P_x,P_y) \leq C d_\tri(x,y) + C.
$$
\end{lemma}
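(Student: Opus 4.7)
The plan is to use the existence of the projection $\Pi$ together with Lemma \ref{lem:pantsdistance} via a subdivision argument along a path in $\tri$.

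First I would fix a path $\gamma:[0,L]\to \tri$ joining $x$ to $y$ with $L \le d_\tri(x,y)+1$, and subdivide it into points $x=x_0,x_1,\ldots, x_N=y$ with $d_\tri(x_i,x_{i+1})\le 1$ and $N\le L+1$. For each $i$ let $y_i\in \trio$ be the point realizing the projection, so $P_i:=\Pi(x_i)$ is the short pants decomposition on $y_i$. Using Lemma \ref{lem:net} and the triangle inequality,
\[
d_\tri(y_i,y_{i+1})\le d_\tri(y_i,x_i)+d_\tri(x_i,x_{i+1})+d_\tri(x_{i+1},y_{i+1})\le 2R+1.
\]
The upshot is that the whole problem reduces to bounding $d_{\MDP(\Sigma_g)}(P_i,P_{i+1})$ by an absolute constant whenever $d_\tri(y_i,y_{i+1})\le 2R+1$; summing over $i$ then produces the required linear bound.

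Next I would convert the metric proximity of $y_i$ and $y_{i+1}$ into bounded topological intersection. Since both the Teichm\"uller and Thurston metrics give a Lipschitz control on lengths of essential curves of the form $\ell_{Y}(\alpha)\le e^{d_\tri(X,Y)}\,\ell_X(\alpha)$ (up to an absolute factor in the Teichm\"uller case), each pants curve of $P_{i+1}$ — which has length $\varepsilon_0$ on $y_{i+1}$ — has length at most $\varepsilon_0 e^{2R+1}$ on $y_i$, and vice versa. The collar lemma applied to the short curves of $P_i$ on $y_i$ (each of length $\varepsilon_0= 2\arcsinh 1$, hence with a standard collar of definite width) then forces each curve of $P_{i+1}$ to intersect $P_i$ at most $K$ times, where $K$ depends only on $R$ and $\varepsilon_0$, hence is absolute. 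Symmetrically the curves of $P_i$ intersect $P_{i+1}$ at most $K$ times.

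Now I apply Lemma \ref{lem:pantsdistance} to the two pants decompositions $P_i$ and $P_{i+1}$ of $\Sigma_g$: they satisfy the hypotheses with this absolute $K$, so $d_{\DP(\Sigma_g)}(P_i,P_{i+1})\le C_K$, and this bound descends to $\MDP(\Sigma_g)$ since the quotient map is $1$-Lipschitz. Summing along the subdivision yields
\[
d_{\MDP(\Sigma_g)}(P_x,P_y)\le \sum_{i=0}^{N-1} C_K \le C_K(d_\tri(x,y)+2),
\]
which gives the statement with $C=2C_K$. The main technical step is the first one, where one must check that the Lipschitz (resp.\ quasiconformal) control coming from the chosen metric on $\moduli$ translates via the collar lemma into a uniform bound on geometric intersection numbers between the short pants decompositions on $y_i$ and $y_{i+1}$; everything else is then packaged by the already-proved Lemma \ref{lem:pantsdistance}.
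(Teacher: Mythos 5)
Your proof is correct and follows essentially the same route as the paper: subdivide a near-geodesic path in $\tri$ into boundedly many pieces, project the subdivision points to $\trio$ via $\Pi$, use Lipschitz length-control plus the collar lemma to get a uniform intersection bound between consecutive short pants decompositions, invoke Lemma \ref{lem:pantsdistance}, and sum. The only cosmetic difference is that the paper covers the path by balls of radius $R$ to produce the subdivision, whereas you cut the path into unit segments and then apply Lemma \ref{lem:net} at each cut point; both yield the same linear estimate.
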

\begin{proof}
Let us consider a path $c$ in $\tri$ between $x$ and $y$ of minimum length 
$\ell(c)$ and a covering of $c$ by a minimal number $m$ of balls, $B_1, \ldots, B_m$, of radius $R$, where $R$ is as in Lemma \ref{lem:net}. Note that
$$
m \leq \lfloor \sfrac{\ell(c)}{2 R} \rfloor +1.
$$
We can suppose that $x \in B_1$, $y\in B_m$. For each of the remaining $B_k$, we choose a point $p_k\in B_k\cap \trio$ and project it to the vertex of $\MDP(\Sigma_g)$ via the canonical map $\Pi$. Again, via the same argument as in the discussion preceding this lemma:
$$
d_{\MDP(\Sigma_g)}(P_k,P_{k+1})\leq C_{I(2R)}.
$$
As such
\begin{eqnarray*}
d_{\MDP(\Sigma_g)}(P_x,P_{y}) & \leq & m \, C_{I(2R)}\\
& \leq & C_{I(2R)} ( \lfloor \sfrac{\ell(c)}{2 R} \rfloor +1)\\
& \leq & C \, d_\tri (x,y) + C.
\end{eqnarray*}
\end{proof}

This lemma completes the proof of Theorem \ref{thm:qi}: from $\tri$ to $\MDP(\Sigma_g)$ the quasi-isometry is given by $\Pi$ and from $\MDP(\Sigma_g)$ to $\tri$ we map pants decompositions to $\trio$ via the canonical map. The latter map is quasi-surjective by Lemma \ref{lem:net}. Finally observe that it is the constants from the above lemma that provide $A,B$ in Theorem \ref{thm:qi}.

\subsection{Counting points in $\trio$ at bounded distance from $\ts$}

Having now established a quasi-isometry between $\tri$ and $\MDP(\Sigma_g)$, we will apply it to a counting argument to establish Theorem \ref{thm:main}. 

\begin{notation} For functions $A(x),B(x)$ we will write $A(x)\approx B(x)$ if they are equal up to an exponential factor in $x$.   By this, we mean that there are constants $0 < c_1 < c_2$ so that $(c_1)^x A(x)\le B(x)\le (c_2)^x A(x)$. 

Similarly, we shall use  $A(x)\lesssim B(x)$ if the inequality holds up to an exponential factor.
\end{notation}

The number of vertices in $\MDP(\Sigma_g)$ and thus in $\trio$ is equal to the number of non-isomorphic connected trivalent graphs with $2g-2$ vertices. Quite a bit is known about the number of these graphs, but what we require is the following result of Bollob\'as:

\begin{center}
{\it The number of vertices in $\trio$ is $\approx g^{2g}$.}
\end{center}

The result of Bollob\'as \cite{Bollobas} is actually stronger but this is sufficient for our purposes. Our general strategy will be to establish that for any fixed $a,b>0$ there are considerably fewer points in
$$\Pi(\tri^{[a,b]}\cap \ts)$$
namely $g^{\nu g}$ with $\nu<2$. Of course this result is only interesting if $\tri^{[a,b]}$ and $\ts$ intersect - but as we have seen above, for well chosen constants $a,b$, there are many different shapes of surfaces in $\ts$ which lie in $\tri^{[a,b]}$.

Note that an equivalent result using $\trio$ will also prove to be true: for any $R>0$ there are at most $g^{\nu' g}$ points (with $\nu'<2$) in 
$$\trio \cap B_R(\ts).$$

The result will then follow from the quasi-isometry established in the previous section and an estimate on the growth of balls in $\MDP(\Sigma_g)$.

We begin with the following lemma.

\begin{lemma}\label{lem:short} For any $\ell>0$ there exist constants $L>0$ and $h>0$ such that for any $x\in \ts \cap \tri$ with systole $\ell$, the trivalent graph $\Pi(x) \in \MDP(\Sigma_g)$ contains at least $h g$ disjoint cycles of length at most $L$. 
\end{lemma}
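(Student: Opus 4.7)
The plan is to use the filling hypothesis to extract, for every pants curve of $x$, a short closed walk in the dual pants graph, and then greedily select a large vertex-disjoint subcollection of cycles from these walks.

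Fix $x \in \ts \cap \tri$ with systole length $\ell$, and choose a pants decomposition $P_x$ of $x$ with curve lengths in $[a,b]$; let $G$ be its trivalent dual graph. Two standard bounded-geometry facts drive the argument. First, by the collar lemma any closed geodesic on $x$ of length $\ell$ meets the union of pants curves of $P_x$ in at most $N = N(\ell,b)$ points, so every systole projects to a closed walk in $G$ of length at most $N$, which in turn contains a cycle of length at most $N$. Second, a compactness argument bounds by $M = M(\ell,a,b)$ the number of distinct simple geodesic arcs of length at most $\ell$ with endpoints on the boundary of a pair of pants whose boundary lengths lie in $[a,b]$; consequently at most $M$ systoles can pass through any single pair of pants of $P_x$.

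The key step is to show that every pants curve $\gamma$ of $P_x$ is crossed by some systole that is not itself a pants curve (a \emph{transverse} systole). The filling hypothesis is equivalent to requiring that every essential simple closed curve $c$ satisfies $\ii(c,\sigma)>0$ for some systole $\sigma$: indeed, if $c$ had zero geometric intersection with every systole, its geodesic representative would be disjoint from all systoles and hence lie in a disk of the complement, contradicting essentiality. Applied to $c:=\gamma$, some systole $\sigma$ satisfies $\ii(\sigma,\gamma)>0$. Then $\sigma\neq\gamma$ (since $\ii(\gamma,\gamma)=0$) and $\sigma$ is not another pants curve (pants curves are pairwise disjoint, giving zero intersection with $\gamma$), so $\sigma$ is transverse.

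Each of the $3g-3$ pants curves requires a transverse systole crossing it, and each transverse systole crosses at most $N$ pants curves, so there are at least $(3g-3)/N$ transverse systoles on $x$; each yields a cycle of length $\leq N$ in $G$. Run the greedy procedure: pick any remaining transverse systole, record a cycle from its walk, and discard every transverse systole visiting a vertex of this cycle. Since each cycle has $\leq N$ vertices and each vertex lies on $\leq M$ transverse systoles, each step discards at most $NM$ systoles, producing at least $(3g-3)/(N^2 M)$ disjoint cycles of length at most $N$ in $G$. Setting $L:=N$ and $h:=1/(N^2 M)$ (with a harmless adjustment for small $g$) proves the claim for $G$. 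Since $\Pi(x)$ differs from the topological type of $P_x$ by a uniformly bounded distance in $\MDP(\Sigma_g)$ and each elementary move affects the dual graph only locally, the statement transfers to $\Pi(x)$ after adjusting constants. The main technical point is the bounded-geometry estimate on $M$: without it the greedy step could in principle discard all systoles at once, and the whole counting collapses.
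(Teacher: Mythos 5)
Your proposal has a genuine gap at its foundation: you assert that ``every systole projects to a closed walk in $G$ of length at most $N$, which in turn contains a cycle of length at most $N$.'' This is false. A closed walk in a graph need not contain a cycle --- it can be a pure backtracking walk whose image subgraph is a tree, with each edge traversed an even number of times. Geometrically this really happens: a simple closed geodesic can enter and exit a pair of pants through the same pants curve, so its projection to the dual graph goes $v \to w \to v$ along the same edge twice. Restricting to \emph{transverse} systoles (those crossing at least one pants curve) does not help: take an essential simple closed curve inside a four-holed sphere crossing the waist curve exactly twice. Its projection is a backtracking walk of length $2$ with no cycle. Since nothing in your argument rules out that \emph{all} of your $(3g-3)/N$ transverse systoles project to such tree walks, the greedy step might harvest zero cycles, and the whole count collapses.

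This is precisely the subtlety the paper addresses with a parity argument. If a simple closed curve projects to a tree walk, then it crosses every pants curve an even number of times, so its $\mathbb{Z}/2$-homology class lies in the Lagrangian subspace spanned by the pants curves. Two such curves must then have even geometric intersection number. Since distinct systoles intersect at most once, and the filling hypothesis forces every systole to intersect some other systole, each systole pairs with another with which it intersects exactly once --- and by the parity argument at least one of each such pair projects to a walk containing a genuine cycle. That is the ingredient your proof is missing. Your subsequent bookkeeping (collar lemma for the bound $N$, compactness for $M$, greedy selection of vertex-disjoint cycles) is reasonable, but it needs to run on the guaranteed-cyclic projections obtained by pairing, not on the set of all transverse systoles. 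Separately, your final ``transfer from the dual graph of $P_x$ to $\Pi(x)$'' step is stated too casually: a bounded number of \emph{simultaneous} Whitehead moves can touch linearly many locations in the graph, so you should argue that each such move changes cycle lengths by a bounded multiplicative factor (and cannot destroy disjointness), or follow the paper in working with $\Pi(x)$ directly via the Lipschitz comparison map.
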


\begin{proof}

We consider a projection of curves on the surface $x$ to paths on the corresponding trivalent graph $\Pi(x)\in \MDP(\Sigma_g)$. For each $x$, the short curves of $x$ will refer to those that belong to the pants decomposition of $\Pi(x)$. To any simple closed curve on $x$, we associate the collection of edges corresponding to the short curves it crosses. Note that the curve might project to a trivial path, by which we mean a subgraph of $\Pi(x)$ without cycles, as do for instance the short curves of $x$.

If the curve projects to a trivial path, this path is necessarily a tree (possibly reduced to a single vertex or a single edge) and because this tree comes from the projection of a closed curve, each edge that appears in this tree must appear an even number of times (that is, the path must go ``back and forth" through each edge it goes through). For this reason, two intersecting curves that both project to trivial paths must intersect at least twice. As such, among any pair of curves that intersect exactly once, at least one projects to a path in the graph containing a non-trivial cycle.

Now consider a homeomorphism $\varphi$ with minimal Lipschitz constant between $x$ and the surface $y\in\trio$ corresponding to the trivalent graph $\Pi(x)$. This Lipschitz constant is at most $e^{R^2}$ (with $R$ from Lemma \ref{lem:net}). We begin by observing that the image under $\varphi$ of a systole $\gamma$ of $x$ is a curve of length at most $e^{R^2}\ell$ on $y$. Now consider the image under $\varphi$ of the full set of systoles of $x$ on $y$. As these curves on $x$ fill, their images under $\varphi$ on $y$ also fill, and hence the projections of these curves from $y$ to $\Pi(x)$ cover every edge. 

As the length of each of the pants curves on $x$ is fixed, the collar lemma ensures that the projection of a systole to $\Pi(x)$ cannot contain more than a certain fixed number $L$ of edges. Now as the systoles fill, every systole intersects another systole (exactly once). As observed above, one of them projects to a non-trivial cycle and as such the $L$-neighborhood of every vertex $v$ in $\Pi(x)$ contains a non-trivial cycle of length at most $L$. 


%
We now establish the lemma by taking a set $V_L$ of vertices of the graph all of distance at least $2L+1$ from each other and by considering cycles of length at most $L$ in the $L$ neighborhood of each. The cycles are all disjoint, and there are at least
$$
\frac{\mbox{total number of vertices}}{\mbox{ number of vertices of distance at most $L$ from a vertex in $V_L$}}
\geq \bigg\lfloor \frac{2g-2}{3 \, 2^{L-1}} \bigg\rfloor
$$

which is bounded below by $h g$ for some suitable choice of $h$. This establishes the lemma.
 \end{proof}
 
We now count trivalent graphs with the property described above. 

\begin{lemma}\label{lem:counting} For constants $h,L,$ there exists a constant $\nu = \nu(h,L)<2$ such that for sufficiently large $g$, there are at most $g^{\nu g}$ isomorphism types of trivalent graphs with $2g-2$ vertices containing at least $hg$ disjoint cycles of length at most $L$.
\end{lemma}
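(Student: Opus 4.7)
The plan is to bound the count of iso types of trivalent graphs with at least $hg$ disjoint short cycles via a \emph{contraction argument}. Given such a graph $G$ equipped with a choice of cycle family $\mathcal{C}=\{C_1,\dots,C_{hg}\}$, I contract each $C_i$ to a single super-vertex to form a macro-graph $G'$: a multigraph in which the super-vertex from $C_i$ has degree $|C_i|\le L$ and every other vertex has degree $3$. Setting $K=\sum_i|C_i|\in[3hg,Lhg]$, the macro-graph has $V'=N-K+hg$ vertices and $E'=3N/2-K$ edges, where $N=2g-2$. The contraction is reversible: $G'$ together with a \emph{blow-up recipe} at each super-vertex (a cyclic arrangement of its incident half-edges and the internal cycle structure) uniquely recovers $(G,\mathcal{C})$ up to iso, so
\begin{equation*}
\#\{\text{iso types of }G\}\;\le\;\#\{\text{iso types of }(G,\mathcal{C})\}\;\le\;\#\{(G',\text{recipe})\}\,.
\end{equation*}

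The number of blow-up recipes at each super-vertex is at most $L^L$, giving at most $(L^L)^{hg}$ recipes in total; summing over cycle-length profiles (which determine $K$) contributes a further $L^{hg}$. Both contributions are exponential in $g$ and hence absorbed into the $\lesssim$ and $\approx$ notation defined earlier. The heart of the argument is thus the count of iso types of macro-graphs with the prescribed mixed degree sequence.

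For this I apply the configuration model: the count of labeled multigraphs on $V'$ vertices with $hg$ super-vertices of degrees $k_i\le L$ and $N-K$ ordinary vertices of degree $3$ equals
\begin{equation*}
\frac{(3N-2K)!}{(3N/2-K)!\,2^{3N/2-K}\,6^{N-K}\,\prod_i k_i!}\,,
\end{equation*}
which Stirling shows to be $\sim(3N-2K)^{3N/2-K}$ up to $c^g$. Dividing by $V'!\sim(V')^{V'}$ (the number of labelings per iso type for generic such graphs, whose automorphism groups are trivial up to subexponential corrections) yields
\begin{equation*}
\#\{\text{iso types of }G'\}\;\lesssim\;\frac{(3N-2K)^{3N/2-K}}{(V')^{V'}}\;\approx\;g^{(1-h)g}\,,
\end{equation*}
where a short Stirling computation verifies that the exponent $(1-h)g+O(g)$ holds uniformly for $K\in[3hg,Lhg]$, as the $K$-terms cancel between numerator and denominator. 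Combining with the previous paragraph, the number of iso types of trivalent graphs on $N$ vertices with at least $hg$ disjoint short cycles is $\lesssim g^{(1-h)g}$, so taking $\nu$ to be any constant in $(1-h,2)$ establishes the lemma with $\nu=\nu(h,L)<2$.

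The main obstacle is a careful justification of the configuration-model estimate with a mixed degree sequence and of the passage from labeled counts to iso counts when super-vertices may contribute local automorphisms. These corrections are standard once one observes that they are at most exponential in $g$ and are thus absorbed in $\approx$; the overcounting from multiple valid choices of cycle family $\mathcal{C}$ per graph $G$ only helps the upper bound.
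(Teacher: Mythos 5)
Your proposal is correct but follows a genuinely different route from the paper's. The paper's proof first performs $O(\log_2 L)$ rounds of simultaneous Whitehead moves to shrink each of the $hg$ short cycles down to a loop, then deletes each loop (together with its attaching edge and the resulting valence-$2$ vertex), producing an honest trivalent graph on $2(1-h)g-2$ vertices. This lets the authors quote Bollob\'as directly for the count of the reduced graphs, bound the ``add back loops'' ambiguity by $2^{3(1-h)g-3}$, and absorb the ambiguity from the Whitehead-move normalization step using the ball-volume estimate of Lemma~\ref{lem:balls} in $\MDP(\Sigma_g)$. You instead contract each cycle to a super-vertex and count the resulting mixed-degree connected multigraphs directly via the configuration model, bounding the blow-up ambiguity by $(L^L)^{hg}$. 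Your route is more self-contained combinatorics (no Whitehead moves, no ball counting in $\MDP$), at the price of having to carry a mixed degree sequence through the Stirling estimate rather than reducing to the regular case handled by Bollob\'as.

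Two remarks. First, your Stirling computation yields the exponent $(1-h)g$, whereas the paper arrives at $2(1-h)g$; this tracks a corresponding factor-of-$2$ discrepancy in the paper's reading of Bollob\'as, who gives $\approx n^{n/2}$ for unlabelled cubic graphs on $n$ vertices, hence $\approx g^{g}$ (not $g^{2g}$) on $2g-2$ vertices. Your exponent appears to be the one consistent with Bollob\'as. For the lemma as stated ($\nu<2$) either value suffices, and for the downstream application in Theorem~\ref{thm:main} the factor cancels against the same factor in the total count, so the final conclusion is unaffected; but you should be aware the mismatch with the paper's numerology is not an error on your part. Second, your justification for dividing by $V'!$ is phrased in terms of generic graphs having trivial automorphism group; for an \emph{upper} bound on iso types what you actually need is a uniform bound $|\mathrm{Aut}|\le c^{V'}$ over \emph{all} connected multigraphs of maximum degree $\le L$ (which holds by a standard BFS argument: pick a root, and the automorphism is determined by its image on the root together with $\le (L-1)!$ choices at each subsequent vertex), combined with the identity $\#\{\text{labelled}\}=\sum_{\text{iso types}} V'!/|\mathrm{Aut}|$. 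Since $c^{V'}$ is absorbed into $\approx$, this repairs the passage to iso types cleanly; stated as ``generic automorphisms are trivial'' the argument is pointing in the wrong direction.
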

\begin{proof}
The proof is in three steps. We begin by reducing the short cycles to loops by performing a bounded number of simultaneous Whitehead moves on the graph. We then remove the $hg$ loops to obtain a trivalent graph with fewer vertices. Finally we count how many trivalent graphs one can obtain by ``reversing" the above process.

We begin by choosing a set of $hg$ disjoint cycles of length at most $L$. On each such cycle we choose a maximum number of disjoint edges (at most $\frac{L}{2}$). This gives us a collection of disjoint edges of the graph on which we can perform elementary moves simultaneously.

\begin{figure}[h]
  \centering
  \includegraphics[width=14cm]{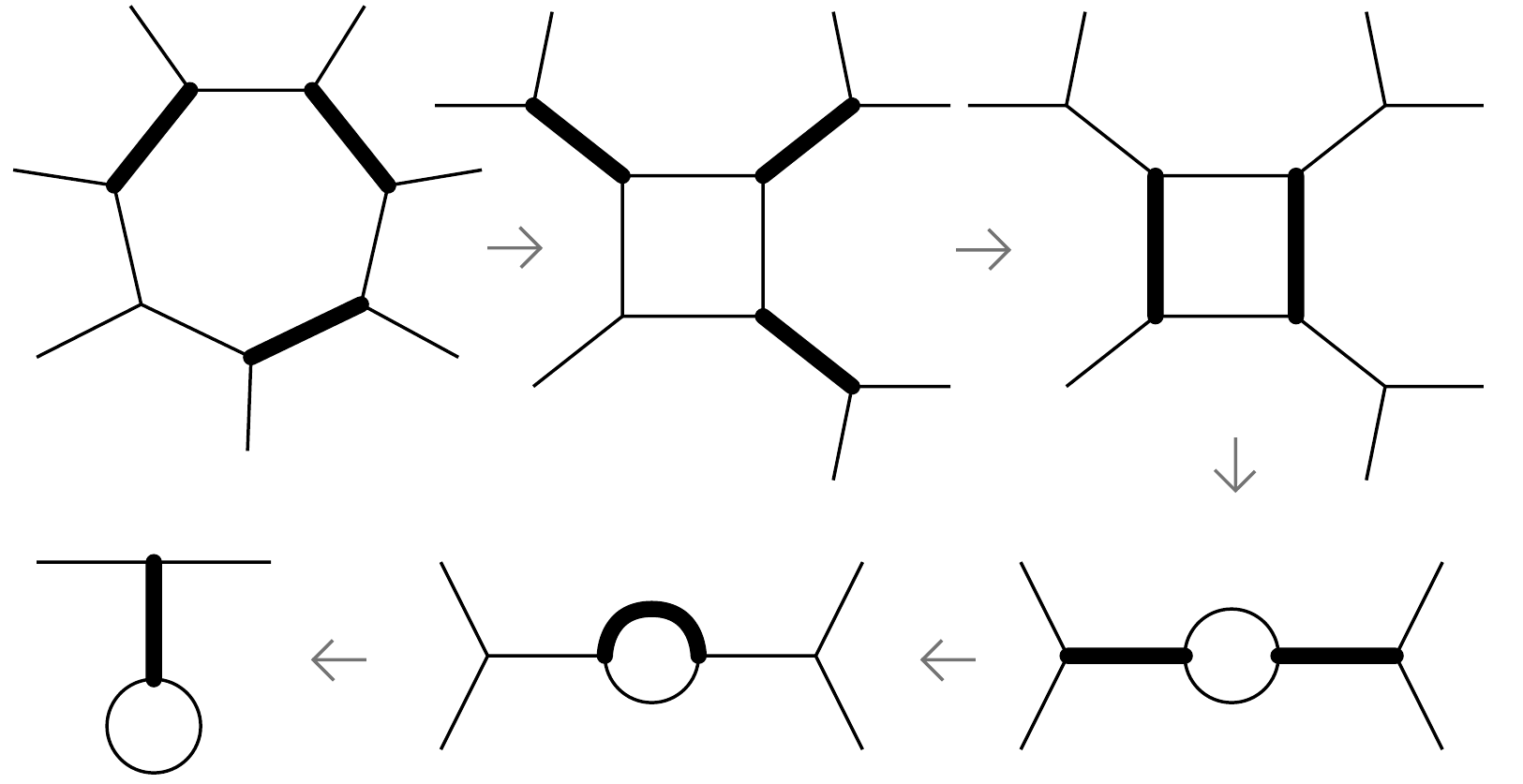}
  \caption{Reducing cycles.}
  \label{fig:Reducing}
\end{figure}

For each loop $\gamma$ with $\ell(\gamma)>1$ we choose the moves carefully so as to reduce the length of the loop by a factor of roughly $\frac{1}{2}$ as in Figure \ref{fig:Reducing}. The resulting loop $\gamma'$ has length precisely 
$$\ell(\gamma')=\lfloor \frac{1}{2} \ell(\gamma) \rfloor +1.$$
Once a cycle has been transformed into a loop, we leave it alone. After at most 
$$\log_2(L) +O(\log_2(L)),$$
steps all $hg$ cycles have been transformed into loops.

Now we consider the trivalent graph obtained by removing the loops as follows: each loop is attached to a vertex which in turn is attached to an edge. We remove the loop and the attached edge and finally we delete the resulting valency $2$ vertices (see Figure \ref{fig:Removing}). The result is a trivalent graph with
$$2g-2-2hg = 2(1-h)g-2$$
vertices. The number of such graphs is at most $\approx$ $g^{2(1-h)g}$, as functions of $g$.
\vspace{0.1cm}
\begin{figure}[h]
  \centering
  \includegraphics{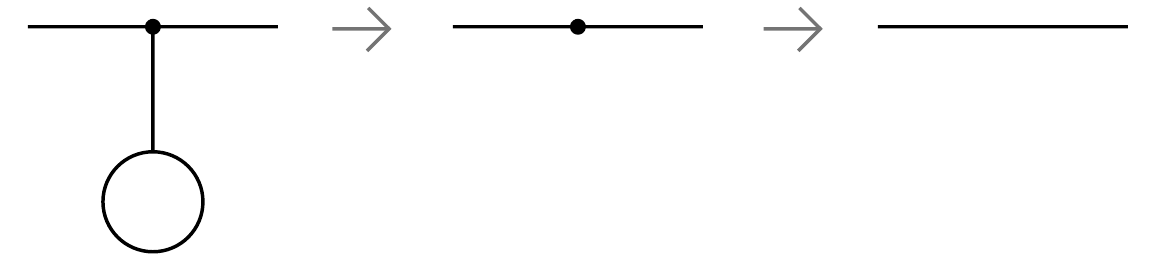}
  \vspace{-.5cm}
  \caption{Removing cycles.}
  \label{fig:Removing}
\end{figure}

This gives a map from graphs with short cycles to trivalent graphs with fewer vertices. By bounding how many different graphs one obtains from a single graph with $2(1-h)g-2$
vertices by reversing the process, we will be able to deduce a bound on how many graphs we began with.

Given a graph with $2(1-h)g-2$ vertices, we need to ``add" vertices back on to it. This consists of adding a vertex to an edge and adding an edge and a loop to the new vertex. A rough bound is given as follows: we may add a loop to an edge or not, and because we have $3(1-h)g-3$ edges, there are at most $2^{3(1-h)g-3}$ possible graphs we can obtain from our initial graph.

Now the above algorithm shows that a trivalent graph with the short cycles is
 in $\MDP(\Sigma_g)$ at most distance 
$$\log_2(L) +O(\log_2(L))$$
away from one of the graphs with $hg$ loops. We can now apply Lemma \ref{lem:balls} (whose proof will follow) to show that there are at most 

$$3^{(g-1)\log_2(L) +O(\log_2(L))}$$

points at distance at most $\log_2(L) +O(\log_2(L))$ from a given point. We now conclude that the total number of graphs with the desired properties is at most 

$$\approx g^{2(1-h)g} 2^{3(1-h)g-3} 3^{(g-1)\log_2(L) +O(\log_2(L))} \approx g^{2(1-h)g}.$$

As such by choosing $2>\nu> 2(1-h)$ we have that this number is bounded by 
$$
g^{\nu g}
$$
for sufficiently large $g$.
\end{proof}

Our last step is to count balls of radius $r$ in $\MDP(\Sigma_g)$.

\begin{lemma}\label{lem:balls} A ball of radius $r$ in $ \MDP(\Sigma_g)$ contains at most $(3^{g-1})^r$ vertices.
\end{lemma}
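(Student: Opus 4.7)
The plan is to bound the maximum degree of $\MDP(\Sigma_g)$ by $3^{g-1}$ and then conclude by induction. Specifically, if every vertex has at most $3^{g-1}$ closed neighbors (itself together with all its adjacent vertices), then the bound $|B_r(v)| \leq (3^{g-1})^r$ follows immediately by iteration: $|B_r(v)| \leq 3^{g-1}\cdot |B_{r-1}(v)| \leq (3^{g-1})^r$.

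For the degree bound, a neighbor of a vertex $v$ of $\MDP(\Sigma_g)$ is, by definition, obtained from $v$ by performing simultaneous elementary moves on pairwise disjoint complexity-$1$ subsurfaces of $\Sigma_g$. I would represent the topological pants decomposition at $v$ by its dual trivalent graph $G$ on $2g-2$ vertices (the pairs of pants) and $3g-3$ edges (the pants curves). Each complexity-$1$ subsurface of $\Sigma_g$ corresponds in $G$ either to a loop (a one-holed torus, using its single incident vertex) or to a non-loop edge together with its two endpoint vertices (a four-holed sphere). Disjointness of the subsurfaces translates to vertex-disjointness in $G$, so the ``active'' edges form a generalized matching $M$ of $G$. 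Each active edge then comes with a bounded number (at most two, by the Whitehead description of an elementary move) of move choices.

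The combinatorial heart of the proof is then a bound of the form $\sum_{M} k^{|M|} \leq 3^{g-1}$, where the sum is over generalized matchings of $G$ and $k$ is the number of move choices per edge. The approach I would take is to partition the $2g-2$ vertices of $G$ arbitrarily into $g-1$ unordered pairs, and encode each configuration via the joint state of each pair. A case analysis of the possible joint states of a pair $\{p_1,p_2\}$ (both inactive; linked to each other by their connecting edge with one of the two move choices; each separately involved in a loop move; each separately involved in a four-holed-sphere move with an outside partner; etc.), combined with bookkeeping to avoid double-counting edges that span two different pairs, should yield at most $3$ states per pair, for a total of at most $3^{g-1}$ configurations.

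The main obstacle is the per-pair counting. The naive vertex-by-vertex encoding only gives a bound of $4^{2g-2}$; the savings to $3^{g-1}$ must come from exploiting that four-holed-sphere moves \emph{pair up} their two pants, so that the local choices at two different vertices of $G$ are not independent. I would expect that the enumeration, while elementary, requires some care—in particular in handling edges that cross between two distinct pairs of the chosen partition. If the clean per-pair bound of $3$ proves too optimistic for certain degenerate local structures, one can fall back on grouping vertices into slightly larger blocks and checking the weaker per-block estimate that still produces the overall $3^{g-1}$ bound.
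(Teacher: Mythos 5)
Your outer scaffolding — bound the maximum degree of $\MDP(\Sigma_g)$ by $3^{g-1}$ and iterate — is exactly what the paper does. But your route to the degree bound diverges significantly, and it has a genuine gap.

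The paper's proof is much shorter and does not pass through matchings at all. It relies on two quick observations. First, moves supported on one-holed tori do \emph{not} give edges in $\MDP(\Sigma_g)$: a one-holed torus has only one topological type of essential simple closed curve, so an elementary move there changes nothing up to homeomorphism. (This is stated explicitly earlier in the section; your plan allows loops of the dual graph to carry move choices, which over-counts.) Second, a surface of genus $g$ has $2g-2$ pairs of pants, so any disjoint collection of four-holed spheres has at most $g-1$ members, and each admits $3$ possible outcomes for the curve (two genuine Whitehead moves plus the identity). That is the whole count.

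Your plan, by contrast, reduces the degree bound to showing $\sum_{M} k^{|M|} \leq 3^{g-1}$ over generalized matchings $M$ of the dual trivalent graph $G$, via a partition of the $2g-2$ vertices into $g-1$ pairs with ``at most 3 states per pair.'' You flag this as the main obstacle and never carry it out, instead gesturing at a fallback with larger blocks. This is a real gap, and in fact the proposed inequality is false without invoking the quotient by graph isomorphism: take $G=K_4$ (genus $3$). There the matchings consist of $1$ empty, $6$ singletons, and $3$ perfect matchings, so with $k=2$ one gets $1+6\cdot 2+3\cdot 4=25$, which exceeds $3^{g-1}=9$. The count only comes back down because $\MDP(\Sigma_g)$ records pants decompositions up to homeomorphism and $K_4$ has a large automorphism group; your per-pair bookkeeping, which works entirely at the level of labelled configurations, cannot see this collapse. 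So you would either need to work up to isomorphism from the start, or — more in line with the paper — abandon the matching-sum formulation, fix once and for all a maximal disjoint family of four-holed spheres, and use the ``$3$ options including identity'' trick to absorb sub-collections. Either way, the clean ``$3$ states per pair of pants'' step as stated will not survive the degenerate local structures (loops, multi-edges, edges straddling two blocks) that you yourself anticipate. I'd recommend replacing the matching machinery with the paper's direct two-observation argument, and in particular making explicit the fact that one-holed torus moves are invisible in $\MDP(\Sigma_g)$.
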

\begin{proof}
We fix a vertex in $\MDP(\Sigma_g)$ and we begin by counting possible elementary moves that result in producing different pants decompositions and then we bound the possible number of pants decompositions that can result from these moves. An elementary move corresponds to a move across an edge in $\MDP(\Sigma_g)$ only if it occurs on a four holed sphere. There are at most $g-1$ disjoint four holed spheres on a surface and up to homeomorphism an elementary move on a curve in a four holed sphere has three possible resulting curves (including leaving the curve invariant). We deduce that there are at most $3^{g-1}$ different pants decompositions up to homeomorphism that can result from a collection of simultaneous elementary moves. By definition, this is a bound on the number of points in $\MDP(\Sigma_g)$ at distance $1$. At distance $r$, we obtain at most $(3^{g-1})^r$.
\end{proof}

We can now prove Theorem \ref{thm:main}. Via Theorem \ref{thm:qi} and Lemma \ref{lem:short}, it suffices to show that there is a point at least distance $C \, \log(g)$ in $\MDP$ from all elements in $\MDP$ corresponding to graphs with collections of disjoint short cycles. Now via Lemma \ref{lem:counting}, we know the number of such graphs is at most $g^{\nu g}$ with $\nu <2$. Thus via Lemma \ref{lem:balls}, as long as
$$
g^{\nu g} \, (3^{g-1})^r < {\mbox{Number of trivalent graphs with $2g-2$ vertices}}
$$
then there are points of $\MDP$ that are distance $r$ from any point in ``$\trio\cap \ts$". By taking `$\log$'s we have
$$
 r 3^{g-1} \lesssim g^{(2-\nu)g},
 $$
as functions of $g$. As $\nu<2$, from this we obtain the existence of points of distance at least
 $$
 r > C \log (g).
 $$
 This establishes Theorem \ref{thm:main}.

{\it Adresses:}\\
James W. Anderson\\Mathematical Sciences, University of Southampton, Southampton, England\\j.w.anderson@southampton.ac.uk

Hugo Parlier\\Department of Mathematics, University of Fribourg, Fribourg, Switzerland\\hugo.parlier@unifr.ch

Alexandra Pettet\\
Department of Mathematics, University of British Columbia, Vancouver, Canada\\
alexandra@math.ubc.ca

\end{document}